%% file: main.tex
\documentclass{article}

\usepackage{amsmath,amssymb,amsfonts,amsthm}
\usepackage{stmaryrd}
\usepackage{mathtools}
\usepackage{sidecap}
\usepackage[margin=2cm]{geometry}
\usepackage{hyperref}
\usepackage[
  backend=biber,
  style=alphabetic,
]{biblatex}
\usepackage{pgfplots}
\pgfplotsset{compat=1.15}
\usepackage{mathrsfs}
\usetikzlibrary{arrows,shapes.geometric}
\definecolor{ffffff}{rgb}{1,1,1}

\usepackage{cleveref}

\sidecaptionvpos{figure}{m}

\newcommand{\ind}{\mathbf 1}
\newcommand{\E}{\mathbb E}
\newcommand{\Proba}{\mathbb P}
\newcommand{\R}{\mathbb R}
\newcommand{\N}{\mathbb N}
\newcommand{\Z}{\mathbb Z}
\newcommand{\Cov}{\mathrm{Cov}}
\newcommand{\Var}{\mathrm{Var}}
\newcommand{\diff}{\mathrm d}
\newcommand{\pivotal}{\text{ pivotal for }}
\newcommand{\occ}{\text{ occupied}}

\newcommand{\lra}{\leftrightarrow}

\newcommand{\nei}{\mathcal N}
\renewcommand{\le}{\leqslant}
\renewcommand{\ge}{\geqslant}

\numberwithin{equation}{section}

\newtheorem{theorem}{Theorem}[section]
\newtheorem{lemma}[theorem]{Lemma}
\newtheorem{prop}[theorem]{Proposition}
\newtheorem{coro}[theorem]{Corollary}
\newtheorem{definition}[theorem]{Definition}
\newtheorem{remark}[theorem]{Remark}

\AddToHook{env/lemma/begin}{\crefalias{theorem}{lemma}}
\AddToHook{env/prop/begin}{\crefalias{theorem}{prop}}
\AddToHook{env/coro/begin}{\crefalias{theorem}{coro}}
\AddToHook{env/definition/begin}{\crefalias{theorem}{definition}}
\AddToHook{env/remark/begin}{\crefalias{theorem}{remark}}

\title{Sharp phase transition for percolation with short-range dependencies}
\author{Olivier Henry, Peter Mörters}

\begin{document}
  \maketitle

  \input{sections/abstract.tex}

  \setcounter{tocdepth}{2}
  \tableofcontents
  
  \input{sections/references}
  \input{sections/site-bond}
  \input{sections/finite-weights}

  \printbibliography[heading=bibintoc]
\end{document}

%% file: sections/abstract.tex
\begin{abstract}
    We show sharpness of the phase transition for a nearest-neighbour percolation model on $\mathbb Z^d$, where vertices carry independent types and the percolation probability of edges depends on the type of the adjacent vertices. Our proof uses the OSSS inequality and adapts to our setup the method developed in Duminil-Copin et al.~\cite{drt17} for the random cluster model.  Additionally, we provide a more extensive study of the special case of combined Bernoulli bond and site percolation featuring a phase transition with two parameters.
\end{abstract}

%% file: sections/references.tex
\section{Introduction}

Percolation on an infinite, transitive graph $\mathscr G=(\mathscr V, \mathscr E)$ is any monotonically increasing  family $(\mu_p \colon p\in[0,1])$ of probability measures on the state space
$\Omega=\{0,1\}^{\mathscr E}$, which are invariant under graph automorphisms. We interpret $\omega(e)=1$ as the edge $e$ being open. For $x \in \mathscr V$ and nonnegative integer $n$, we let 
$\Lambda_n(x) = \{y \in \mathscr V \colon d(x,y) \le n\}$
be the ball around $x$ of radius~$n$ with respect to the graph metric~$d$, and $\partial\Lambda_{n+1}(x) = \Lambda_{n+1}(x) \backslash \Lambda_n(x)$.
We write $x \lra y$ if there is a path of open edges connecting $x$ and $y$, $x \lra V$ if there is a path of open edges connecting $x$ and an element of $V\subset\mathscr V$, 
and $x \lra \infty$ if there is a path of open edges connecting $x$ to points of arbitrarily large graph distance to $x$. Define
$$p_{\mathrm c} := \inf \{p \in [0, 1] \colon  \mu_p(x \lra \infty) > 0\}$$
and note that, by transitivity, this definition does not depend on the choice of $x \in \mathscr V$ and by monotonicity
$\mu_p(x \lra \infty) > 0$ for all $p>p_{\mathrm c}$. We say that the percolation model has a 
\textit{phase transition} if $p_{\mathrm c}\in(0,1)$ and the phase transition is \textit{sharp} if 
\begin{itemize}
  \item For $0\le p<p_{\mathrm c}$, there exists $c_p>0$ such that for any $n$ large enough, $\mu_p(0 \lra \partial \Lambda_n) \le \exp(-c_p n)$.
  \item There exists $c>0$ such that  $\mu_p(0 \lra \infty) \ge c(p-p_{\mathrm c})$ for all $1 \ge p \ge p_{\mathrm c}$.
\end{itemize}
The classical model is \emph{Bernoulli bond percolation} where $\mathscr G$ is the integer lattice $\mathbb Z^d$ with edges connecting $x,y\in\mathbb Z^d$ if and only if $\|x-y\|_2=1$ and $\mu_p$ the product measure with $\mu_p(\omega(e)=1)=p$ for any edge $e$.  For this model a sharp phase transition was first proved independently by Aizenman and Barsky~\cite{ab87} and Menshikov~\cite{men}, and recently beautiful new proofs were discovered by Duminil-Copin and Tassion~\cite{dt16}, Vanneuville~\cite{van25}, and 
Duminil-Copin et al.~\cite{drt17}. The latter proof 
applies to the random cluster model, a percolation model with dependent edges. The authors state that \emph{the techniques developed in this paper have potential to be
extended to a wide class of models including the Ashkin-Teller model, continuum percolation models such as Voronoi percolation and Boolean percolation, super-level sets of massive Gaussian free field, and random-cluster and Potts model with infinite range interactions.} Some of this research programme has already been carried out, see for example \cite{Dum19, Mu20, Beek21, Der21, Last23, Aoun24}.
The aim of the present note is to explore further how this proof idea can be used in models with dependent edges with a long-term view towards developping versatile methods for models with long-range dependence.
\medskip

In Section~\ref{sec:osss} we present the general setup of the proof of Duminil-Copin et al.~\cite{drt17} and in Section~\ref{sec:model} we introduce a model where 
edges are dependent if they share a vertex. 
In Sections~\ref{sec:site-bond} and~\ref{sec:fw} we show sharpness of the phase transition for instances of this model.

\subsection{The OSSS method}\label{sec:osss}

We briefly introduce the framework of the proof of a sharp phase transition 
as in~\cite{drt17}.
We start with the crucial inequality, the OSSS inequality, which allows us to bound the variance of the output of a randomized algorithm, or equivalently a decision tree.
\smallskip

A \emph{decision tree} is a family of functions $\phi_t : \{0, 1\}^{t-1} \to E$. For $\omega \in \{0, 1\}^E$, we define the edge $e_1(\omega) = \phi_1(\emptyset)$ then the edge $e_2(\omega) = \phi_2(\omega(e_1(\omega)))$, and more generally at time $t$ the edge $e_t(\omega) = \phi_t(\omega(e_1(\omega)), \ldots, \omega(e_{t-1}(\omega)))$\footnote{In \cite{drt17} the function $\phi$ is allowed to also take $e_1, \ldots, e_{t-1}$ as arguments, but these are determined by $\omega(e_1), \ldots, \omega(e_{t-1})$.}.
A tree usually computes the value of a function $f$, and we say that the tree stops at the minimal time $\tau(\omega)$ when the states of the edges $e_1, \dots, e_{\tau(\omega)}$ are sufficient to compute $f$, that is 
\begin{equation}
  \forall \psi \in \{0, 1\}^E \text{ such that } \forall 1 \le k \le \tau(\omega), \psi(e_k(\omega)) = \omega(e_k(\omega)) \text{ we have } f(\psi) = f(\omega).
\end{equation}
We do not need to define the tree after time $\tau$ and say that the edges $e_1, \dots, e_\tau$ are revealed.\smallskip

The (generalized) OSSS inequality bounds the variance of $f$ with the influence of the elements $e \in E$ over $f$ combined with the probability that they are revealed. It is proved in \cite[Theorem 1.1]{drt17}.

\begin{theorem}[Generalized OSSS inequality]
  Let $\mu$ be a monotonic\footnote{See \cite{gri06} for the definition and properties of monotonic measures. A measure is monotonic if and only if it has the FKG property.} measure on $\{0, 1\}^E$ where $E$ is finite. Let $T$ be a decision tree computing an increasing random variable $f$. Then
  \begin{equation}
      \Var(f) \le \sum\limits_{e \in E} \delta_e(f, T) \Cov(f, \omega(e))
  \end{equation}
  where
  \begin{equation}
    \delta_e(f, T) := \mu(e \text{ is revealed by } T)
  \end{equation}
  \label{thm:OSSS}
\end{theorem}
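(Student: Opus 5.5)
The plan is to reduce the statement to the classical OSSS inequality for product measures, following the strategy of \cite{drt17}. We encode the monotonic measure $\mu$ as the image of an i.i.d.\ family $U=(U_e)_{e\in E}$ of uniform $[0,1]$ variables under a map $F\colon[0,1]^E\to\{0,1\}^E$. The map $F$ is built \emph{along the decision tree} $T$. At step $t$ the tree queries $e_t=\phi_t(\omega(e_1),\dots,\omega(e_{t-1}))$, and we set
\begin{equation*}
\omega(e_t)=\ind\bigl\{U_{e_t}\ge 1-\mu\bigl(\omega(e_t)=1 \bigm| \omega(e_s),\, s<t\bigr)\bigr\}.
\end{equation*}
After $\tau$, the remaining edges are filled in the same way in some fixed order. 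By construction $\omega=F(U)$ has law $\mu$. The same tree, now querying $U_{e_t}$ instead of $\omega(e_t)$, is a decision tree for $g:=f\circ F$ on the product space. It reveals exactly the same edges, so the revealment $\delta_e$ is unchanged.

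The first key step is to show that $F$ is coordinatewise increasing in $U$, so that $g$ is increasing. Monotonicity (FKG in the strong sense of \cite{gri06}) of $\mu$ says that $\mu(\omega(e)=1\mid \xi \text{ on } S)$ is increasing in the boundary condition $\xi$. We then compare two inputs $U\le U'$ along the exploration. As long as the explored edges and their values agree, raising $U_e$ can only open more. The delicate point is that the tree may branch differently once a value changes. Following \cite{drt17}, we handle this by noting that we only need $\omega\le\omega'$ edgewise, and we argue inductively using monotonicity of conditional probabilities on arbitrary sets $S$, not on nested ones. This gives the claim because the conditional law given any revealed configuration dominates that given any smaller configuration on the same or a different set of edges.

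Next we apply the product-space OSSS inequality of O'Donnell--Saks--Schramm--Servedio to $g$:
\begin{equation*}
\Var(g)\le \sum_{e} \delta_e(g,T)\,\mathrm{Inf}_e(g),
\qquad
\mathrm{Inf}_e(g)=\E\bigl|g(U)-g(U^{(e)})\bigr|,
\end{equation*}
where $U^{(e)}$ resamples the coordinate $e$ (up to a constant factor, which is harmless or absorbed depending on the convention). Since $\Var(g)=\Var(f)$ and the revealments agree, it remains to bound $\mathrm{Inf}_e(g)$ by $\Cov(f,\omega(e))$.

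The final step, which I expect to be the main obstacle, is exactly this comparison. For increasing $g$ one has $\mathrm{Inf}_e(g)\le 2\,\Cov(g,U_e)$-type bounds. For a uniform coordinate these reduce to covariance with the monotone variable $U_e$. We then need to transfer this to $\Cov(f,\omega(e))$, which is the covariance under $\mu$ of $f$ with the indicator of $e$.

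The difficulty is that $\omega(e)$ is a thresholded function of $U_e$ with a \emph{random} threshold depending on the past of the exploration. I would first try to show $\Cov(g,U_e)\le \Cov(f,\omega(e))$ by conditioning on $\mathcal F_{<e}$, the information before $e$ is revealed. Given $\mathcal F_{<e}$, $g$ is increasing in $U_e$, and $\omega(e)$ is the increasing indicator of $U_e$ above the threshold. The FKG property of the conditional measure then lets us compare the two covariances. A further application of FKG for $\mu$ would handle the outer expectation (the conditional covariances sum to at most the total covariance, because the conditional means are themselves increasing and positively correlated). If this pathwise construction becomes unwieldy, the fallback is to define $F$ via a fixed enumeration and use the tree only through revealment, at the cost of verifying that revealment is preserved.
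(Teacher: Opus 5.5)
The paper does not prove this statement; it quotes \cite[Theorem 1.1]{drt17}. Measured against that argument, your reduction has a genuine gap, and it sits exactly at the step you flag as delicate.

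\textbf{The map $F$ is not increasing.} With edge-indexed uniforms and thresholds computed along the adaptive exploration of $T$, $F$ is \emph{not} coordinatewise increasing. Your justification, domination of conditional laws given configurations ``on the same or a different set of edges'', is false. Monotonicity of $\mu$ only compares $\mu(\cdot\mid\omega_S=\xi)$ with $\mu(\cdot\mid\omega_S=\xi')$ for $\xi\le\xi'$ on the \emph{same} set $S$. Once raising $U_e$ sends $T$ into another branch, later edges are thresholded on different information, and nothing is comparable.

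Here is a concrete counterexample. Let $E=\{a,b,c\}$, and let $\omega(a)$ be a fair coin independent of $(\omega(b),\omega(c))$. Put $\mu(\omega(b)=\omega(c)=0)=\mu(\omega(b)=\omega(c)=1)=\tfrac12-\epsilon$ and $\mu(\omega(b)=1,\omega(c)=0)=\mu(\omega(b)=0,\omega(c)=1)=\epsilon$, with $0<\epsilon<0.15$. This measure is strictly positive and satisfies the FKG lattice condition, so it is monotonic. Let $T$ reveal $a$, then $b,c$ if $\omega(a)=0$ and $c,b$ if $\omega(a)=1$, and let $f=\omega(b)$. With $U_b=0.7$ and $U_c=0.3$, your construction gives $\omega=(0,1,1)$ if $U_a<\tfrac12$ and $\omega=(1,0,0)$ if $U_a\ge\tfrac12$. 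So $g=f\circ F$ drops from $1$ to $0$ as $U_a$ increases.

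\textbf{Why this is fatal.} It is not just a sign issue. Here $\delta_a=1$ and $\Cov(f,\omega(a))=0$. Yet given $U_{-a}$, the two branches output $\ind\{U_b\ge\tfrac12\}$ and $\ind\{U_b\ge 1-\mu(\omega(b)=1\mid\omega(c))\}$ with $\omega(c)=\ind\{U_c\ge\tfrac12\}$. These disagree with probability $\tfrac12-2\epsilon$, so $\mathrm{Inf}_a(g)>0$. Hence no bound $\mathrm{Inf}_e(g)\le C\,\Cov(f,\omega(e))$ can hold, and product OSSS applied to $f\circ F$ as a black box cannot give the theorem.

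Your fallback of a fixed enumeration restores monotonicity but destroys revealment, since $\omega(e)$ then depends on the uniforms of all earlier edges. Your last step has a related defect. $\mathcal F_{<e}$ is generated by an adaptively chosen set of edges, so $\mu(\omega(e)=1\mid\mathcal F_{<e})$ need not be increasing in $\omega$. In the example with $e=b$, it equals $\tfrac12$ on $\{\omega(a)=0\}$ and $2\epsilon$ on $\{\omega(a)=1,\omega(c)=0\}$. So FKG cannot be applied to the outer covariance.

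\textbf{The missing idea.} You need to rerun the OSSS interpolation by hand, comparing only configurations built \emph{along a common order}. For a fixed order, the sequential construction is monotone in the uniforms by monotonicity of $\mu$. This is essentially the argument of \cite{drt17}:
\begin{itemize}
\item Build $\mathbf X$ from $\mathbf U$ along its own exploration order $\mathbf e_1,\mathbf e_2,\dots$, completed by the unrevealed edges.
\item Take an independent copy $\mathbf V$.
\item Let $\mathbf Y^t$ be built along the order of $\mathbf X$ from $(V_1,\dots,V_t,U_{t+1},\dots,U_\tau,V_{\tau+1},\dots)$.
\end{itemize}
Then each $\mathbf Y^t$ has law $\mu$, $f(\mathbf Y^0)=f(\mathbf X)$, and $\mathbf Y^\tau$ is independent of $\mathbf X$. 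For $t\le\tau$, $\mathbf Y^{t-1}$ and $\mathbf Y^t$ are ordered and differ only if their values at $\mathbf e_t$ differ.

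The values on $S=\{\mathbf e_1,\dots,\mathbf e_{t-1}\}$ come from $\mathbf V$. So, given $\mathbf U$, they are $\mu$-distributed on a set $S$ that is fixed and independent of the event $\{\mathbf e_t=e,\,t\le\tau\}$. Therefore $\E[|f(\mathbf Y^t)-f(\mathbf Y^{t-1})|\,;\,\mathbf e_t=e,\,t\le\tau]=2\E[\ind_{\{\mathbf e_t=e,\,t\le\tau\}}\Phi_S]$, where $\Phi_S$ is the $\mu$-average over $\xi$ of $\Cov_{\mu(\cdot\mid\omega_S=\xi)}(f,\omega(e))$.

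At this point your total-covariance-plus-FKG idea does apply and gives $\Phi_S\le\Cov_\mu(f,\omega(e))$. Summing over $t$ and using $\Var(f)\le\tfrac12\E|f(\mathbf X)-f(\mathbf Y^\tau)|$, valid for $f$ with values in $[0,1]$, gives the statement.
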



The following lemma is to be applied with $f_n = c\Proba(0 \lra \partial \Lambda_n)$ (with $c > 0$) and will immediately give the sharp phase transition. A proof can be found in \cite[Lemma 3.1]{drt17}.

\begin{lemma}
  Let $(f_n)_{n \ge 0}$ be a sequence of increasing differentiable functions from $[0, \beta_0]$ to $[0, M]$. Assume that this sequence converges to a function $f$ and that, for all $n \ge 1$, we have
  \begin{equation}
    f_n' \ge \frac n {\sum\limits_{k=0}^{n-1} f_k} f_n.
    \label{eq:3.1}
  \end{equation}
  Then there exists $\beta_1 \in [0, \beta_0]$ such that
  \begin{itemize}
      \item for $\beta < \beta_1$ there exists $c_\beta > 0$ such that for any $n$ large enough $f_n(\beta) \le \exp(- c_\beta n)$,
      \item for $\beta > \beta_1, f(\beta) \ge \beta - \beta_1$.
  \end{itemize}
  \label{lem:3.1}
\end{lemma}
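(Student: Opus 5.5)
Following \cite[Lemma 3.1]{drt17}, we set
\[
\beta_1 := \inf\Bigl\{\beta \colon \limsup_{n\to\infty} \frac{\log \Sigma_n(\beta)}{\log n} \ge 1\Bigr\}, \qquad \Sigma_n := \sum_{k=0}^{n-1} f_k .
\]
Both regimes are then obtained by integrating the differential inequality \eqref{eq:3.1}, in two different ways. All $f_n$ are increasing in $\beta$, hence so is $\Sigma_n$, and $\beta_1$ is a genuine threshold.

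\emph{Subcritical regime.} Fix $\beta<\beta_1$ and pick $\beta<\beta'<\beta_1$. By definition of $\beta_1$, at $\beta'$ there exists $\delta>0$ with $\Sigma_n(\beta')\le n^{1-\delta}$ for all large $n$. By monotonicity the same bound holds on $[\beta,\beta']$. Then \eqref{eq:3.1} gives $f_n'\ge n^{\delta}f_n$ on $[\beta,\beta']$. Integrating $(\log f_n)'\ge n^\delta$ from $\beta$ to $\beta'$ yields
\[
f_n(\beta)\le M\exp\bigl(-(\beta'-\beta)n^\delta\bigr).
\]
This is stretched exponential, so we bootstrap. It implies $\Sigma_n(\beta'')\le C$ uniformly in $n$ for any $\beta''\in(\beta,\beta')$. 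Applying \eqref{eq:3.1} again on $[\beta,\beta'']$ gives $f_n'\ge (n/C)f_n$, and integrating gives $f_n(\beta)\le M\exp(-(\beta''-\beta)n/C)$. This is the claimed exponential decay; the constant $M$ is absorbed for $n$ large.

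\emph{Supercritical regime.} Fix $\beta>\beta_1$. Define
\[
T_n:=\frac{1}{\log n}\sum_{i=1}^n \frac{f_i}{i}.
\]
Compute
\[
T_n'=\frac{1}{\log n}\sum_{i=1}^n\frac{f_i'}{i}\ge \frac{1}{\log n}\sum_{i=1}^n\frac{f_i}{\Sigma_i}.
\]
Since $f_i=\Sigma_{i+1}-\Sigma_i$ and $\Sigma$ is increasing, $\frac{f_i}{\Sigma_i}\ge \log\Sigma_{i+1}-\log\Sigma_i$ (comparison of a sum with $\int \diff x/x$). Hence
\[
T_n'\ge \frac{\log\Sigma_{n+1}-\log\Sigma_1}{\log n}.
\]
This is where the hypothesis enters, and it is the delicate step. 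We want $\log\Sigma_{n+1}\ge(1-o(1))\log n$ uniformly on $[\beta_1+\epsilon,\beta]$, which follows from monotonicity in $\beta$ and the definition of $\beta_1$, at least along a subsequence of $n$. Working along a subsequence is harmless: $T_n\to f$ as $n\to\infty$ by Cesàro averaging, since $\sum_{i\le n} 1/i\sim\log n$ and $f_i\to f$. The term $\log\Sigma_1=\log f_0$ requires $f_0>0$; if $f_0$ vanishes, we replace $\Sigma_1$ by a later $\Sigma_{i_0}$ that is bounded away from $0$ on $[\beta_1+\epsilon,\beta]$, which costs only an $O(1/\log n)$ error.

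Integrating $T_n'\ge 1-o(1)$ from $\beta_1+\epsilon$ to $\beta$ gives $T_n(\beta)\ge (1-o(1))(\beta-\beta_1-\epsilon)$, using $T_n\ge 0$. Letting $n\to\infty$ and then $\epsilon\to0$ yields $f(\beta)\ge\beta-\beta_1$.

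The main obstacle is pinning the definition of $\beta_1$ so that both regimes apply. The subcritical side needs a polynomial gain $n^{\delta}$, and the supercritical side needs $\log\Sigma_n/\log n\to1$. With $\beta_1$ defined via the $\limsup$ as above, the supercritical side requires $\liminf_n \log\Sigma_n/\log n\ge1$ on $(\beta_1,\beta_0]$. We would get this either by checking that at $\beta>\beta_1$ the $\limsup$ condition at slightly smaller $\beta$ upgrades to a $\liminf$ statement, or by noting that the subsequence version of the argument above already suffices.
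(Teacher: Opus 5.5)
Your proposal is correct and is essentially the proof of \cite[Lemma 3.1]{drt17}, to which the paper defers: the same $\beta_1$, the same bootstrap below $\beta_1$ (stretched exponential, then bounded $\Sigma_n$, then exponential decay), and the same functional $T_n$ above it. The hedging in your last paragraph is unnecessary, because the subsequence argument you already gave closes the supercritical case: by monotonicity, the integrated lower bound only involves $\Sigma_{n+1}$ at the single point $\beta_1+\epsilon$, while $T_n(\beta)$ converges along the full sequence. Likewise, the $f_0$ issue disappears since $-\log\Sigma_1\ge-\log M$.
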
\pagebreak[3]


The first step in order to obtain the assumption of Lemma~\ref{lem:3.1} is the following intermediate result \cite[Lemma 3.2]{drt17}. We consider the functions $f_n = \ind_{0 \lra \partial \Lambda_n}$. To smoothen the discrepancies between different $\delta_e(T, f)$, Theorem~\ref{thm:OSSS} is applied to several decision trees $T_k$ for $1 \le k \le n$ before summing. Namely, $T_k$ is the decision tree associated with a graph walk in $\Lambda_n$ starting with all vertices in $\partial \Lambda_k$. It results in the sum of the $\delta_e(T_k, f)$ for a fixed $e$ being bounded by the denominator of the right-hand side of \eqref{eq:lem-4.6}.

\begin{lemma}
  Let $G = (V, E)$ be a finite graph containing $0$, and $\mu$ a monotonic measure on $\{0, 1\}^E$. Then, for any $n \in \N^*:=\{1,2,\ldots\}$, we have
  \begin{equation}
    \sum\limits_{e \in E} \Cov(\ind_{(0 \lra \partial \Lambda_n)}, \omega(e)) \ge \frac n {4 \max\limits_{x \in \Lambda_n} \sum\limits_{k=0}^{n-1} \mu(x \lra \partial \Lambda_k(x))} \mu(0 \lra \partial \Lambda_n) (1 - \mu(0 \lra \partial\Lambda_n)).
    \label{eq:lem-4.6}
  \end{equation}
  \label{lem:3.2}
\end{lemma}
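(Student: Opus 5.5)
We follow the strategy of \cite[Lemma 3.2]{drt17}: apply Theorem~\ref{thm:OSSS} to a family of $n$ decision trees $T_1,\dots,T_n$, all computing $f=\ind_{(0\lra\partial\Lambda_n)}$, average the resulting inequalities over $k$, and bound the averaged revealment by a uniform quantity. Since $f$ is increasing and $\mu$ is monotonic, Theorem~\ref{thm:OSSS} gives for each $k$
\begin{equation*}
  \mu(0\lra\partial\Lambda_n)\bigl(1-\mu(0\lra\partial\Lambda_n)\bigr)=\Var(f)\le \sum_{e\in E}\delta_e(f,T_k)\Cov(f,\omega(e)).
\end{equation*}
Each covariance is nonnegative: $\omega(e)$ is increasing and $\mu$ has FKG. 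Summing over $k$ therefore yields
\begin{equation*}
  n\Var(f)\le \Bigl(\max_{e\in E}\sum_{k=1}^n\delta_e(f,T_k)\Bigr)\sum_{e\in E}\Cov(f,\omega(e)).
\end{equation*}
It remains to show $\sum_{k=1}^n\delta_e(f,T_k)\le 4\max_{x\in\Lambda_n}\sum_{k=0}^{n-1}\mu(x\lra\partial\Lambda_k(x))$ for every edge $e$. Edges outside $\Lambda_n$ are never revealed, so it suffices to treat edges inside $\Lambda_n$.

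For each $1\le k\le n$ we construct $T_k$ as an exploration of the clusters of $\partial\Lambda_k$ inside $\Lambda_n$. We maintain a set of explored vertices, initially $V_0=\partial\Lambda_k$. At each step the tree reveals the smallest (in a fixed ordering) unrevealed edge of $\Lambda_n$ having an endpoint in the current set, and adds the other endpoint to the set if the edge is open. When no such edge remains, the tree has discovered every vertex of $\Lambda_n$ connected to $\partial\Lambda_k$. Every path from $0$ to $\partial\Lambda_n$ crosses $\partial\Lambda_k$, so $0\lra\partial\Lambda_n$ holds exactly when some vertex of $\partial\Lambda_k$ is connected both to $0$ and to $\partial\Lambda_n$ within $\Lambda_n$. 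Both connections are determined by the revealed edges, since they lie in clusters of $\partial\Lambda_k$. Hence $T_k$ computes $f$. If the edges of $\Lambda_n$ are not all in $E$, we restrict to $G$; only the structure of $\Lambda_n$ within $G$ matters.

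Now fix an edge $e=\{x,y\}$. If $T_k$ reveals $e$, then one of its endpoints, say $x$, is connected to $\partial\Lambda_k$ by open edges. This forces $x\lra\partial\Lambda_{|d(0,x)-k|}(x)$, because any path from $x$ to the sphere $\partial\Lambda_k$ must reach graph distance at least $|d(0,x)-k|$ from $x$. A union bound over the two endpoints then gives
\begin{equation*}
  \delta_e(f,T_k)\le \mu\bigl(x\lra\partial\Lambda_{|k-d(0,x)|}(x)\bigr)+\mu\bigl(y\lra\partial\Lambda_{|k-d(0,y)|}(y)\bigr).
\end{equation*}
As $k$ runs over $1,\dots,n$, the quantity $|k-d(0,x)|$ takes each value in $\{0,\dots,n-1\}$ at most twice. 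Using monotonicity in the radius, and $n-1\ge$ all relevant values, we get
\begin{equation*}
  \sum_k\mu\bigl(x\lra\partial\Lambda_{|k-d(0,x)|}(x)\bigr)\le 2\sum_{j=0}^{n-1}\mu\bigl(x\lra\partial\Lambda_j(x)\bigr).
\end{equation*}
Summing over the two endpoints gives the factor $4$. The convention that $x\lra\partial\Lambda_0(x)$ is the sure event handles the case $d(0,x)=k$.

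The main obstacle is the exploration step, in two respects. First, one must check that $T_k$ genuinely determines $f$ and is a valid decision tree. Second, one must check that a revealed edge really forces one of its endpoints into the cluster of $\partial\Lambda_k$: an edge is revealed only when one endpoint already lies in the explored set. A further subtlety is whether the connection from $x$ to $\partial\Lambda_k$ lies in $\Lambda_n$ or in $G$. The events $x\lra\partial\Lambda_j(x)$ in the statement are understood in $G$ under $\mu$, and the connection inside $\Lambda_n$ implies the connection in $G$, so the inclusion of events goes in the right direction.
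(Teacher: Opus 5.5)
Your proposal is correct and follows essentially the same route as the paper, which cites \cite[Lemma 3.2]{drt17} and repeats the argument in Lemmas~\ref{lem:4.6-site} and~\ref{lem:4.6-fw}: apply Theorem~\ref{thm:OSSS} to the exploration trees $T_k$ started from $\partial\Lambda_k$, bound $\delta_e(f,T_k)$ by $\mu(x\lra\partial\Lambda_{|k-d(0,x)|}(x))+\mu(y\lra\partial\Lambda_{|k-d(0,y)|}(y))$, and sum over $k$. The one step to state more carefully is $x=0$: there $|k-d(0,x)|=k$ takes the value $n$, so your remark that $n-1$ bounds all relevant values is false in that case, and you should make the index shift $\mu(0\lra\partial\Lambda_k)\le\mu(0\lra\partial\Lambda_{k-1})$ explicit, as the paper does.
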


{The max in the denominator of the right-hand side of \eqref{eq:lem-4.6} can be removed if we have translation invariance, resulting in the right-hand side of \eqref{eq:3.1}. In the case of product measures the left-hand side can be linked to $\frac \diff {\diff \beta} \mu(0 \lra \partial \Lambda_n)$ using Russo's formula (see \cite[Theorem 2.25]{gri99} for more details), effectively proving the sharp phase transition in that case.}
\smallskip

\begin{definition}
  Let $\Omega = \{0, 1\}^E$ with $E$ at most countable. For $\omega \in \Omega$ and $F, G \subset E$ such that $F \cap G = \emptyset$, we denote
  \begin{equation}
    \omega^F_G : e \in E \mapsto \begin{cases}
      1 & \text{ if } e \in F \\
      0 & \text{ if } e \in G \\
      \omega(e) & \text{ otherwise}
    \end{cases}
  \end{equation}
  If $F$ or $G$ is empty, we omit it. If $F$ or $G$ is a singleton $\{e\}$, we write $e$ instead of $\{e\}$\footnote{We always write $\omega(e)$ for the actual value of the $e$-component of $\omega$ in order to avoid confusion.}.
  Consider an increasing event $A \subset \Omega$. We define the event that $e \in E$ is pivotal for $A$ to be
  \begin{equation}
    \{\omega \in \Omega: (\omega_e \notin A) \wedge (\omega^e \in A)\}.
  \end{equation}
\end{definition}

\begin{theorem}[Russo's formula]
  Let $\Omega = \{0, 1\}^E$ with $E$ at most countable and, for $p \in [0, 1]^E$, let $\Proba_p$ be the product probability measure on $\Omega$ such that for all $e \in E$ we have $\Proba_p(\omega(e) = 1) = p_e$. Then, if $A$ is an increasing event and~$e\in E$,
  \begin{equation}
    \partial_{p_e} \Proba_p(A) = \Proba_p(e \pivotal A)
  \end{equation}
  \label{thm:Russo}
\end{theorem}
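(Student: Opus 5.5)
We prove Russo's formula for an increasing event $A$ and a product measure $\Proba_p$. The plan is to condition on all coordinates other than $e$, which isolates the dependence on $p_e$ into an explicitly affine function.

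\textbf{Step 1 (decomposition).} For $\omega \in \Omega$, the configurations $\omega^e$ and $\omega_e$ do not depend on $\omega(e)$. Hence the events $\{\omega^e \in A\}$ and $\{\omega_e \in A\}$ are measurable with respect to the coordinates in $E\setminus\{e\}$. Under the product measure these are independent of $\omega(e)$, and their laws do not depend on $p_e$.

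Since $\omega = \omega^e$ on $\{\omega(e)=1\}$ and $\omega=\omega_e$ on $\{\omega(e)=0\}$, we can write
\begin{equation*}
  \ind_A(\omega) = \omega(e)\,\ind_{\omega^e \in A} + (1-\omega(e))\,\ind_{\omega_e\in A}.
\end{equation*}
Taking expectations and using the independence above gives
\begin{equation*}
  \Proba_p(A) = p_e\,\Proba_p(\omega^e\in A) + (1-p_e)\,\Proba_p(\omega_e \in A),
\end{equation*}
where both probabilities on the right do not depend on $p_e$.

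\textbf{Step 2 (differentiation).} The map $p_e \mapsto \Proba_p(A)$ is therefore affine, and its derivative is
\begin{equation*}
  \partial_{p_e}\Proba_p(A) = \Proba_p(\omega^e\in A) - \Proba_p(\omega_e\in A).
\end{equation*}

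\textbf{Step 3 (monotonicity).} Since $A$ is increasing and $\omega_e \le \omega^e$ pointwise, we have $\{\omega_e\in A\}\subset\{\omega^e\in A\}$. The difference of probabilities is thus
\begin{equation*}
  \Proba_p\bigl(\omega^e\in A,\ \omega_e\notin A\bigr) = \Proba_p(e \pivotal A),
\end{equation*}
which is the claimed formula.

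\textbf{Main obstacle.} The only subtle point arises when $E$ is countably infinite, where $A$ is merely a measurable increasing event. Two things must be justified.

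First, we need that $\omega\mapsto\omega^e$ and $\omega\mapsto\omega_e$ are measurable maps. This holds because they act coordinatewise.

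Second, we need that the law of $(\omega(f))_{f\ne e}$ under $\Proba_p$ does not depend on $p_e$. This holds because $\Proba_p$ is a product measure, so this law is itself a product measure that does not involve $p_e$.

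With these in hand, the decomposition in Step 1 is valid without any finite-dimensional approximation. No limiting argument is needed because the formula is exactly affine in $p_e$. At the endpoints $p_e\in\{0,1\}$, the derivative is understood one-sidedly.
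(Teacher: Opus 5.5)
Your proof is correct, and it is essentially the argument the paper relies on. The paper gives no proof of its own; it points to the single-coordinate step in Grimmett's proof of Russo's formula, which uses the monotone coupling through uniform variables to show $\Proba_{p'}(A)-\Proba_p(A)=(p'_e-p_e)\,\Proba_p(e \pivotal A)$ when only the $e$-coordinate changes, whereas you get the same exact affine dependence on $p_e$ by conditioning on the coordinates in $E\setminus\{e\}$ (the same decomposition the paper uses in its proof of Lemma~\ref{lem:cov}), and your remarks on measurability for countable $E$ and one-sided derivatives at $p_e\in\{0,1\}$ cover the only technical points.
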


Theorem~\ref{thm:Russo} as stated is in fact an intermediate step in the proof of the formula in \cite[Theorem 2.25]{gri99}. When applying it  we need additional flexibility compared to the formulation of~\cite[Theorem 2.25]{gri99}.

\begin{remark}
  Combining Russo's formula with the chain rule we can differentiate $\Proba(A)$, when $A$ depends on finitely many components. Indeed, if $A$ depends on the components in $F \subset E$ with $F$ finite, and for all $e \in E$ the function $p_e : \beta \mapsto p_e(\beta)$ is of class $C^1$, then
  \begin{equation}
    \frac \diff {\diff \beta} \Proba_{p(\beta)}(A) = \sum\limits_{e \in F} p_e'(\beta) \Proba_{p(\beta)}(e \pivotal A)
  \end{equation}
  Russo's formula as needed for Bernoulli percolation, and as provided in \cite[Theorem 2.25]{gri99}, is the formula above for the special case $p_e(\beta) = \beta$.
\end{remark}

Now that we can express $\frac \diff {\diff p} \Proba_p(0 \lra \partial \Lambda_n)$, we need to compute the covariances in the left-hand side of \eqref{eq:lem-4.6}.

\begin{lemma}
  Let $E$ be a set and $\Proba$ a product probability measure on $\{0, 1\}^E$. Let $A$ be an increasing event and~$e\in E$. Then
  \begin{equation}
    \Cov(\omega(e), \ind_A) = \Var(\omega(e)) \Proba(e \pivotal A)
  \end{equation}
  \label{lem:cov}
\end{lemma}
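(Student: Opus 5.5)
The plan is to condition on the value of the coordinate $\omega(e)$ and use independence between $\omega(e)$ and the remaining coordinates. Write $p_e = \Proba(\omega(e)=1)$, so that $\Var(\omega(e)) = p_e(1-p_e)$. If $p_e\in\{0,1\}$, both sides vanish: $\omega(e)$ is almost surely constant, so the covariance is zero. We may therefore assume $0<p_e<1$.

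First, decompose the event $A$ according to the pair $(\ind_A(\omega^e), \ind_A(\omega_e))$. Because $A$ is increasing, $\omega_e \in A$ implies $\omega^e\in A$. Hence
\[
  \ind_A(\omega) = \ind_A(\omega_e) + \omega(e)\,\ind_{\{e \pivotal A\}}(\omega).
\]
This holds because $\omega$ equals $\omega^e$ when $\omega(e)=1$ and equals $\omega_e$ when $\omega(e)=0$. In the first case the two indicators differ exactly on the pivotal event; in the second case the correction term is zero.

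Second, observe that both $\ind_A(\omega_e)$ and the pivotality event $\{e\pivotal A\}$ depend only on the coordinates $(\omega(f))_{f\neq e}$, since they are defined via $\omega_e$ and $\omega^e$. Under the product measure these coordinates are independent of $\omega(e)$. Covariance is bilinear, so the first term contributes $\Cov(\omega(e),\ind_A(\omega_e))=0$ by independence. For the second term,
\[
\Cov\big(\omega(e),\omega(e)\ind_{\{e\pivotal A\}}\big) = \E[\omega(e)^2]\,\Proba(e\pivotal A) - \E[\omega(e)]^2\,\Proba(e\pivotal A).
\]
This factorisation uses independence again. Since $\omega(e)^2=\omega(e)$, the right-hand side equals $(p_e - p_e^2)\Proba(e\pivotal A)=\Var(\omega(e))\Proba(e\pivotal A)$, which is the claim.

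There is no real obstacle here. The only point requiring care is checking that the pivotal event is measurable with respect to the coordinates other than $e$, so that independence applies. This is immediate from the definition through $\omega_e$ and $\omega^e$. If $E$ is infinite, the argument is unchanged, because the product measure still makes $\omega(e)$ independent of the $\sigma$-algebra generated by the other coordinates.
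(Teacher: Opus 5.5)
Your proposal is correct and follows the paper's own proof: the paper also writes $\ind_A = \ind_A(\omega_e) + \omega(e)\ind_{\{e \pivotal A\}}$. It then uses independence of $\omega(e)$ from the other coordinates under the product measure to kill the first covariance term and to factor the second into $\Var(\omega(e))\,\Proba(e \pivotal A)$.
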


\begin{proof}
  \begin{align*}
    \Cov(\omega(e), \ind_A) & = \Cov(\omega(e), \ind_{e \pivotal A, \omega(e) = 1} + \ind_A(\omega_e)) \\
                           & = \Cov(\omega(e), \omega(e) \ind_{e \pivotal A}) + 0 \\
                           & = \E((\omega(e) - \E(\omega(e))) \omega(e) \ind_{e \pivotal A}) \\
                           & = \E((\omega(e) - \E(\omega(e))) \omega(e)) \E(\ind_{e \pivotal A}) \\
                           & = \Var(\omega(e)) \Proba(e \pivotal A)
  \end{align*}
\end{proof}

The final steps in the case of product measures consist of (locally) bounding the remaining factors before using Lemma~\ref{lem:3.1}. 

\subsection{Percolation with short-range dependencies}\label{sec:model}

The model that fueled this research is the following. We consider the lattice $\Z^d$ with edges $E = \{\{x, y\} \subset \Z^d: \|x-y\|_1 = 1\}$ and the parameter $\beta \in \R_+$. Let $\nu_x \ge 0$ for $x \in \Z^d$ be iid weights and $\eta : \R^2 \to \R_+$ a nondecreasing function. Given the weights $\nu$, the states of the edges are conditionally  independent and an edge $e = \{x, y\}$ is open with probability $1 - \exp(- \beta \eta(\nu_x, \nu_y))$.
\smallskip

In this context, we denote $\Lambda_n(x) = \{y \in \Z^d: \|x-y\|_\infty \le n\}$ instead of using the graph distance.

\subsubsection{Site and bond percolation}

We start in Section~\ref{sec:site-bond} by exploring the very simple case where $\eta$ is the product or the minimum and the weights $(\nu_x \colon x\in\mathbb Z^d)$ are iid Bernoulli variables of parameter $q$, it is equivalent to independent Bernoulli bond percolation with parameter $p = 1 - \exp(-\beta)$ and site percolation with paramater $q$. We are therefore primarily interested in the behaviour of this model with fixed $q$, but also explore the phase transition in the square $[0, 1]^2$.\smallskip

We prove that there are open subcritical and supercritical phases separated by a critical curve, with some regularity properties (bounds on its slope). Moreover, we prove exponential decay in the subcritical phase and the inequality $\Proba_{\gamma(t)}(0 \lra \infty) \ge c(t - t_c)$ for some curves $\gamma$, depending on their slopes when crossing the critical curve.

\subsubsection{Percolation with finitely many weights}

This first example provided enough insight to study the case where the weights $\nu_x$ have finitely many possible values, and the proof of the sharpness of the potential phase transition we obtained holds for a more general class of models, as described in Section~\ref{sec:fw}.

%% file: sections/site-bond.tex
\section{Phase transition for  site and bond Bernoulli percolation}
\label{sec:site-bond}

Here we work on the case where $\eta$ is the product or the minimum and the weights $\nu_x$ are Bernoulli variables. The model is equivalent to independent site percolation with parameter $q \in [0, 1]$ and bond percolation with parameter $p \in [0, 1]$ at the same time.

\subsection{Using the OSSS inequality on site percolation}

We prove a counterpart of Lemma~\ref{lem:3.2} in the case of site percolation.

\begin{lemma}
  Let $G = (V, E)$ be a finite subgraph of $\Z^d$ containing $0$. For $n \in \N:=\{0,1,2,\ldots\}$ and $x \in V$ we denote $L_n(x) = \Lambda_n(x) \cap V$ and $\partial L_n(x) = \partial \Lambda_n(x) \cap V$. Let $\mu$ be an increasing measure on $\{0, 1\}^V$. For all $n \in \N^*$ we have
  \begin{equation}
    \sum\limits_{x \in V} \Cov(\ind_{0 \lra \partial L_n}, \omega(x)) \ge \frac n 2 \mu(0 \lra \partial L_n)(1 - \mu(0 \lra \partial L_n)) \min\limits_{y \in L_n} \frac {\mu(\omega(y) = 1)} {\sum\limits_{k=0}^{n-1} \mu(y \lra \partial L_k(y))}.
    \label{eq:4.6-site}
  \end{equation}
  \label{lem:4.6-site}
\end{lemma}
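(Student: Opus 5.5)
Following the strategy of \cite[Lemma 3.2]{drt17}, we apply the generalized OSSS inequality (Theorem~\ref{thm:OSSS}) on $E=V$ to $f=\ind_{0\lra\partial L_n}$, which is increasing, using a family of decision trees $T_k$, $1\le k\le n$, and then average over $k$. Here a site $x$ is open when $\omega(x)=1$, and $0\lra\partial L_n$ means there is a path of open sites from $0$ to $\partial L_n$. For each $k$, the tree $T_k$ explores the clusters of the sites of $\partial L_k$. It first reveals the sites of $\partial L_k$ in a fixed order. It then repeatedly reveals an unrevealed site of $L_n$ adjacent to a site that is already revealed, open, and connected within $L_n$ to $\partial L_k$ by revealed open sites. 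It stops when no such site remains, after which it reveals anything else needed to decide $f$.

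The tree determines $f$ for the following reason. Any open path from $0$ to $\partial L_n$ crosses $\partial L_k$, because $0\in L_{k-1}$ for $k\ge1$. Hence the whole open path belongs to the explored clusters of $\partial L_k$, and $f$ is known once the exploration ends. If the exploration ends without $f$ being decided, we reveal the remaining sites in any fixed order; such sites do not affect the value anyway. One detail needs care. For site percolation a closed boundary site still has to be revealed, while its neighbours need not be explored. So a site $x\notin\partial L_k$ is revealed only if some neighbour $z$ of $x$ is open and connected to $\partial L_k$. In particular $x$ is revealed only if $z\lra\partial L_k$ for some $z$ adjacent to $x$, or $x\in\partial L_k$.

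Theorem~\ref{thm:OSSS} gives $\Var f\le\sum_x\delta_x(f,T_k)\Cov(f,\omega(x))$. Averaging over $k$ yields
\[
\mu(0\lra\partial L_n)(1-\mu(0\lra\partial L_n))\le \sum_x \Big(\frac1n\sum_{k=1}^n\delta_x(f,T_k)\Big)\Cov(f,\omega(x)),
\]
and since every covariance is nonnegative by FKG, it suffices to bound $\sum_{k=1}^n\delta_x(f,T_k)$ uniformly in $x$.

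This bound is the main obstacle, and it is where the factor $\mu(\omega(y)=1)$ enters. For $x$ to be revealed at step $k$ we need either $x\in\partial L_k$, or a neighbour $y$ that is open and connected to $\partial L_k$. Dealing with the boundary term directly gives an additive $1$, which does not fit the claimed form. I would handle it as follows. The event "$y$ open and $y\lra\partial L_k$" has probability at most $\mu(y\lra \partial L_{j}(y))$, where $j=|k-\|y\|_\infty|$ roughly, because $\partial L_k$ is at $\infty$-distance $|k-\|y\|_\infty|$ from $y$. Summing over $k$ covers each distance at most twice, which is where the constant $2$ comes from. For the boundary term $x\in\partial L_k$, which happens for one value of $k$, and for $j=0$, we use $1\le \mu(y\lra\partial L_0(y))/\mu(\omega(y)=1)$ with $y=x$. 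This holds because $y\lra\partial L_0(y)$ is just the event $\{\omega(y)=1\}$.

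Combining these gives
\[
\sum_k\delta_x(f,T_k)\le C\max_y \frac{\sum_{j=0}^{n-1}\mu(y\lra\partial L_j(y))}{\mu(\omega(y)=1)}.
\]
Inverting the max into a min then yields \eqref{eq:4.6-site}. The delicate bookkeeping is to get the constant $C=2$ and not a multiple coming from the $2d$ neighbours. I would first try to get it by ordering the exploration so that each site is revealed from a single designated parent $y$ that is connected to $\partial L_k$. I would then bound by $\mu(x\lra\partial L_k)$ directly, dividing through by $\mu(\omega(x)=1)$ using the fact that $x$'s own state and the connection from $x$ are related by monotonicity.
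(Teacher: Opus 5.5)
Your framework is the paper's: OSSS applied to the explorations $T_k$ from $\partial L_k$ inside $L_n$, averaged over $k=1,\dots,n$, with nonnegative covariances by FKG. The gap is in the revealment bound, which is where both the constant and the factor $\mu(\omega(y)=1)$ come from. The argument you actually carry out is a union over the at most $2d$ neighbours $z$ of $x$ that are open and connected to $\partial L_k$, plus a separate boundary term. It gives
\[
\sum_{k=1}^n\delta_x(f,T_k)\le 1+4d\max_{y\in L_n}\sum_{j=0}^{n-1}\mu\bigl(y\lra\partial L_j(y)\bigr),
\]
that is, the lemma with $n/(4d+1)$ instead of $n/2$. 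In this bound $\mu(\omega(y)=1)$ enters only through the trivial estimate $\mu(\omega(y)=1)\le 1$. A designated parent does not repair this. The events ``$x$ is revealed from $z$'' are disjoint in $z$, but bounding each by a connection probability from $z$ still costs the factor $2d$.

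The missing step, which your last sentence only gestures at, is to bypass the neighbours entirely. The site $x$ is revealed by $T_k$ only if $\omega^x\in\{x\lra\partial L_k\}$. This also covers $x\in\partial L_k$, so no separate boundary term arises. The event $B=\{\omega:\omega^x\in\{x\lra\partial L_k\}\}$ and the event $\{\omega(x)=1\}$ are both increasing, and $\{\omega(x)=1\}\cap B\subseteq\{x\lra\partial L_k\}$. Since $\mu$ is only assumed monotonic, you cannot use independence of $\omega(x)$ and $B$, but FKG gives
\[
\mu(\omega(x)=1)\,\delta_x(f,T_k)\le\mu(\omega(x)=1)\,\mu(B)\le\mu(x\lra\partial L_k)\le\mu\bigl(x\lra\partial L_{|k-\|x\|_\infty|}(x)\bigr).
\]
Summing over $k$, each index in $\llbracket 0,n-1\rrbracket$ appears at most twice. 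For $x=0$, shift the index using $\mu(0\lra\partial L_k)\le\mu(0\lra\partial L_{k-1})$. This yields
\[
\sum_k\delta_x(f,T_k)\le\frac{2}{\mu(\omega(x)=1)}\sum_{j=0}^{n-1}\mu(x\lra\partial L_j(x)),
\]
which is exactly $C=2$.

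The division requires $\mu(\omega(x)=1)>0$. The paper disposes of the degenerate case first: the minimum is then $0$ and the left-hand side is nonnegative by FKG. The paper uses the convention that $x\lra x$ always holds, so $\mu(x\lra\partial L_0(x))=1$ and the ratio is well defined. Under your convention $\{y\lra\partial L_0(y)\}=\{\omega(y)=1\}$, that ratio would be $0/0$.
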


\begin{proof}
  We assume that $\forall x \in V, \mu(\omega(x) = 1) > 0$. Otherwise, according to the FKG inequality, the result is true.\smallskip

  For $k \in \llbracket 1, n \rrbracket$ we consider the decision tree $T_k$ associated to the graph walk from $\partial L_k$ and restricted to $L_n$, that computes $f = \ind_{0 \lra \partial L_n}$. The vertex $x \in L_n$ is revealed in $\omega \in \Omega$ iff it is connected to $\partial L_k$ in $\omega^x$, therefore, using the FKG inequality
  \begin{equation}
    \mu(\omega(x) = 1) \delta_x(T_k, f) \le \mu(x \lra \partial L_k) \le \mu(x \lra \partial L_{|k - \| x \|_\infty|}(x)).
  \end{equation}
  When $k$ goes from $1$ to $n$, the integer $|k - \| x \|_\infty|$ takes each value in $\llbracket 0, n-1 \rrbracket$ at most twice (except for the case $x=0$, for which we use that $0 \le \mu(0 \lra \partial L_k) \le \mu(0 \lra \partial L_{k-1})$). Hence
  \begin{equation}
    \sum\limits_{k=1}^n \delta_x(T_k, f) \le \frac 2 {\mu(\omega(x) = 1)} \sum\limits_{k=0}^{n-1} \mu(x \lra \partial L_k(x)) \le 2 \max\limits_{y \in L_n} \frac 1 {\mu(\omega(y) = 1)} \sum\limits_{k=0}^{n-1} \mu(y \lra \partial L_k(y)).
  \end{equation}
  Now we apply Theorem~\ref{thm:OSSS}.
  \begin{align}
    n \mu(0 \lra \partial L_n) (1 - \mu(0 \lra \partial L_n)) & = \sum\limits_{k=1}^n \Var(f) \\
                                                              & \le \sum\limits_{k=1}^n \sum\limits_{x \in V} \delta_x(T_k, f) \Cov(f, \omega(x)) \\
                                                              & \le 2 \sum\limits_{x \in V} \Cov(f, \omega(x)) \max\limits_{y \in L_n} \frac 1 {\mu(\omega(y) = 1)} \sum\limits_{k=0}^{n-1} \mu(y \lra \partial L_k(y)).
  \end{align}
  Therefore \eqref{eq:4.6-site} holds.
\end{proof}

\subsection{Using the OSSS inequality on bond and site percolation}

\begin{SCfigure}
  \centering
  \input{figs/site-bond}
  \caption{The new graph $G$. The vertices are white in $V_0$ (former vertices) and black in $V_1$ (former edges). The whole point of this transformation is that a vertex with exactly two neighbours is just an edge between them.}
  \label{fig:site-bond}
\end{SCfigure}

To apply Lemma~\ref{lem:4.6-site} we notice that the current model is equivalent to site percolation on a subgraph of $\Z^d$, consisting of vertices $x \in \Z^d$ with $|\{i \in \llbracket 1, d \rrbracket: x_i \equiv 1 \pmod 2\}| \le 1$ (see Figure~\ref{fig:site-bond}). We denote $G = (V, E)$ this graph. We denote $V_0 = 2 \Z^d$ and $V_1 = V \backslash V_0$. We consider site percolation on $G$, where sites are occupied independently with probability $q$ for sites in $V_0$ and $p$ for sites in $V_1$. For $x \in V$ and $n \in \N$, we denote $L_n(x) = \Lambda_n(x) \cap V$ and $\partial L_n(x) = \partial \Lambda_n(x) \cap V$. Denote $$A_n = \{0 \lra \partial L_n\}, \quad \theta_n(p, q) = \Proba(A_n) \qquad \text{ and } \qquad \theta(p, q) = \Proba(0 \lra \infty).$$
We can use Lemma~\ref{lem:4.6-site}. The boxes $L_n$ can be seen as half as large as the original boxes $\Lambda_n$, which simply changes the unexpressed constants in the final result. We apply Lemma~\ref{lem:4.6-site} in the finite graph $L_{2n}$ (so that, for $x \in L_n$ and $k \in \llbracket 0, n \rrbracket$, we have $L_k(x) \subset L_{2n}$). When computing the minimum, we have only two possible values: if $x \in V_0$, then
\begin{equation}
  \frac {\Proba(\omega(x) = 1)} {\sum\limits_{k=0}^{n-1} \Proba(x \lra \partial L_k(x))} = \frac q {\sum\limits_{k=0}^{n-1} \Proba(0 \lra \partial L_k)} .
\end{equation}
If $x \in V_1$, with neighbours $y, z$, then
\begin{equation}
  \frac {\Proba(\omega(x) = 1)} {\sum\limits_{k=0}^{n-1} \Proba(x \lra \partial L_k(x))} = \frac p {\sum\limits_{k=0}^{n-1} \Proba(x \lra \partial L_k(x))} .
\end{equation}
Note that $\Proba(x \lra \partial L_0(x)) = 1 = \Proba(0 \lra \partial L_0)$\footnote{The convention that $x \lra x$ iff $x$ is occupied may be more natural, but for the sake of simplicity we choose the convention that $x \lra x$ always holds.} and, for $k \ge 1$,
\begin{equation}
  \Proba(x \lra \partial L_k(x)) \le \Proba(y \lra \partial L_{k-1}(y)) + \Proba(z \lra \partial L_{k-1}(z)) = 2 \Proba(0 \lra \partial L_{k-1}).
\end{equation}
Therefore,
\begin{equation}
  \sum\limits_{k=0}^{n-1} \Proba(x \lra \partial L_(x)) \le \Proba(0 \lra \partial L_0) + 2 \sum\limits_{k=0}^{n-2} \Proba(0 \lra \partial L_k) \le 3 \sum\limits_{k=0}^{n-1} \Proba(0 \lra \partial L_k).
\end{equation}
We conclude that
\begin{equation}
  \min\limits_{y \in V} \frac {\Proba(\omega(y) = 1)} {\sum\limits_{k=0}^{n-1} \Proba(y \lra \partial L_k(y))} \ge \frac {p \wedge q} {3 \sum\limits_{k=0}^{n-1} \Proba(0 \lra L_k)}.
\end{equation}
Now, Lemma~\ref{lem:4.6-site} becomes
\begin{equation}
  \sum\limits_{u \in V} \Cov(\ind_{(0 \lra \partial L_n)}, \omega(u)) \ge \frac {n (p \wedge q)} {6 \sum\limits_{k=0}^{n-1} \Proba(0 \lra \partial L_k)} \Proba(0 \lra \partial L_n) (1 - \Proba(0 \lra \partial L_n)).
\end{equation}
By Theorem~\ref{thm:Russo} and Lemma~\ref{lem:cov}, 
\begin{align}
  \partial_p \theta_n & = \sum\limits_{x \in V_1} \Proba(x \pivotal A_n) = \frac 1 {p(1-p)} \sum\limits_{x \in V_1} \Cov(\ind_{A_n}, \omega(x)), \\
  \partial_q \theta_n & = \sum\limits_{x \in V_0} \Proba(x \pivotal A_n) = \frac 1 {q(1-q)} \sum\limits_{x \in V_0} \Cov(\ind_{A_n}, \omega(x)).
\end{align}
Before we can put everything together, we need inequalities relating the probability of being pivotal for vertices in~$V_0$ and $V_1$.

\subsection{Inequalities with pivotal variables}

\begin{SCfigure}
  \input{figs/edge-pivotal-so-vertex-too}
  \caption{$L_6$ (remember that boxes are twice as small) and the connected component of the origin. For the event $0 \lra \partial L_6$, the pivotal edges are $e_1$ and $e_2$. Their endpoints are pivotal as well (provided that these edges are open). Indeed, a path going through $e_1$ or $e_2$ has to go through their endpoints. However, even though both endpoints of $e_3$ are pivotal, $e_3$ itself is not.}
  \label{fig:edges-pivotal-so-vertex-too}
\end{SCfigure}

\begin{lemma}
  Let $0 < p, q < 1$ and $n \in \N^*$. Then for $e \in V_1$ with neighbours $x, y$,
  \begin{equation}
    \Proba(e \pivotal A_n) \le \frac qp (\Proba(x \pivotal A_n) + \Proba(y \pivotal A_n))
  \end{equation}
  and, for $x \in V_0$,
  \begin{equation}
    \Proba(x \pivotal A_n) \le \frac {(2d-1)p} {q (1-p)^{2d-1}} \sum\limits_{e \in \nei(x)} \Proba(e \pivotal A_n),
  \end{equation}
  where $\nei(x)$ denotes the set of vertices adjacent to vertex~$x$\footnote{Note that $\nei(x) \subset V_1$ corresponds to the set of edges adjacent to $x$ in the original graph.}.
  \label{lem:ineq-pivot-site-bond}
\end{lemma}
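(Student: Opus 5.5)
Both inequalities will be proved by local modification (``surgery'') arguments on the independent site configuration of $G$. Each time we compare the event that one site is pivotal with the event that a neighbouring site is pivotal, paying the probability cost of changing the states of a bounded number of sites. Throughout, pivotality of a site $u$ is a condition on $\omega$ restricted to $V\setminus\{u\}$. It is therefore independent of $\omega(u)$, which lets us multiply probabilities of pivotality by probabilities of single site states.

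\emph{First inequality.} Let $e\in V_1$ have neighbours $x,y\in V_0$. Suppose $e$ is pivotal for $A_n$. In $\omega^e$ there is an occupied path from $0$ to $\partial L_n$, and every such path passes through $e$, hence through $x$ and $y$ (as in Figure~\ref{fig:edges-pivotal-so-vertex-too}). In $\omega_e$ there is no such path. So on the event $\{e \pivotal A_n\}\cap\{\omega(e)=1\}$ the configuration lies in $A_n$, and closing $x$ (or $y$) destroys every crossing, since every crossing uses $e$ and hence $x$ and $y$. Also $x$ and $y$ must be occupied: a crossing through $e$ passes through them, and the origin case is handled by the convention $0\lra 0$. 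It follows that
\begin{equation*}
  \{e\pivotal A_n\}\cap\{\omega(e)=1\} \subset \{x\pivotal A_n\}\cup\{y\pivotal A_n\}.
\end{equation*}
By independence the left side has probability $p\,\Proba(e\pivotal A_n)$. This gives $p\,\Proba(e\pivotal A_n)\le \Proba(x\pivotal A_n)+\Proba(y\pivotal A_n)$, which is stronger than the claim because $q\le 1$. If we want the exact factor $q/p$, we can additionally note that $x$ is occupied on this event and use independence of $\{x\pivotal A_n\}$ from $\omega(x)$. One subtlety is the case where $x=0$ or $x\in\partial L_n$; there the conventions make $x$ trivially pivotal or the inclusion still holds, and I will check these boundary cases separately.

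\emph{Second inequality.} Let $x\in V_0$ be pivotal, so $\omega^x\in A_n$ and $\omega_x\notin A_n$. Every crossing in $\omega^x$ passes through $x$, entering via some $e\in\nei(x)$ and leaving via some $e'\in\nei(x)$. Fix a crossing; it uses exactly two neighbours $e,e'$ of $x$, or one if $x=0$ or $x$ is the endpoint on $\partial L_n$. Now modify the configuration: make $x$ occupied (cost factor $q$) and close the remaining at most $2d-1$ neighbours of $x$ other than a chosen $e$ (cost factor at least $(1-p)^{2d-1}$). In the modified configuration every crossing goes through $x$ and through both used edges, so each of them, for instance $e'$, is pivotal. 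The map has multiplicity and cost bounded as follows:
\begin{itemize}
  \item we sum over the choice of $e'$;
  \item the modified sites' previous states are forgotten, which costs a factor $1/((1-p)^{2d-1}q)$ in probability;
  \item the at most $2d-1$ choices of which neighbour gets rerouted give the factor $2d-1$;
  \item the factor $p$ arises because in the image the pivotal edge $e'$ is required open, and we convert back using independence of $\{e'\pivotal A_n\}$ from $\omega(e')$.
\end{itemize}
Altogether this yields $\Proba(x\pivotal A_n)\le \frac{(2d-1)p}{q(1-p)^{2d-1}}\sum_{e\in\nei(x)}\Proba(e\pivotal A_n)$.

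The main obstacle is this second surgery. One must check that closing the other neighbours does not destroy the crossing, which is true since the chosen crossing uses only $e$ and $e'$ among the neighbours of $x$. One must also check that it does not create a crossing avoiding $e'$, which is true since closing sites only removes paths. The remaining work is to carry out the measure-change bookkeeping correctly, so that the constants match exactly $(2d-1)p/(q(1-p)^{2d-1})$.
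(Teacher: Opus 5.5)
Your proposal follows the paper's proof. The first inequality comes from the inclusion $\{e \pivotal A_n\}\cap\{\omega(e)=1\}\subset\bigcup_{z\in\{x,y\}}\bigl(\{z\pivotal A_n\}\cap\{\omega(z)=1\}\bigr)$ plus independence of a site's pivotality from its own state. The second comes from the surgery $\omega\mapsto\omega^x_{\nei(x)\setminus\{e,e'\}}$ (resp.\ $\omega^x_{\nei(x)\setminus\{e\}}$ when $x\in\{0\}\cup\partial L_n$), with a union bound over the used neighbours, which yields the stated constant.

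There are two slips to fix. First, $p\,\Proba(e\pivotal A_n)\le\Proba(x\pivotal A_n)+\Proba(y\pivotal A_n)$ is \emph{weaker} than the claim, not stronger, because the factor $q\le 1$ shrinks the right-hand side. So the occupancy refinement you call optional is exactly what is needed, and it is what the paper does. Second, in the interior case you must close only the $2d-2$ neighbours of $x$ other than \emph{both} used sites $e,e'$, since closing all but one destroys the crossing.
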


\begin{proof}
  We can notice that, for the events we consider, a vertex in $V_1$ can be pivotal and occupied only if at least one of its two neighbours (its "endpoints") is pivotal and occupied (see Figure~\ref{fig:edges-pivotal-so-vertex-too}). Indeed, a path in $L_n$ from $0$ to $\partial \Lambda_n$ going through $e \in V_1$ has to go through its two neighbours (or only one, and always the same, if $n$ odd and $e \in \partial L_n$).
  Therefore, if $e \in V_1$ has endpoints $x$ and $y$,
  \footnote{The inequality below is suboptimal, as one of the right-hand side terms can be removed. However, when we sum over all $e$, it make little difference. For example, with $n=1$, we must always keep $0$ so we have a $2d$ factor. This phenomenon occurs around the $d-2$ faces of the cube $L_n$ with $n$ odd.}
  \begin{equation}
    p \Proba(e \pivotal A_n) \le q \Proba(x \pivotal A_n) + q \Proba(y \pivotal A_n).
  \end{equation}
  We also have an inequality in the other direction. Let $x \in L_{n-1} \cap V_0 \backslash \{0\}$. Let $\omega \in \Omega$ where $x$ is pivotal for $A_n$. Then, in $\omega^x$, $0 \lra \partial L_n$ and all paths from $0$ to $\partial L_n$ go through $x$. Therefore, there exists a pair of occupied vertices $e, f \in \nei(x) \subset V_1$ and a vertex-self-avoiding path in $\omega^x$ from $0$ to $\partial L_n$ going through $e, x, f$. This path still exists in $\omega^x_{g \in \nei(x) \backslash \{e, f\}}$, but any path from $0$ to $\partial L_n$ has to go through $x$ and therefore through $e$ and $f$ as well, which means that they are both pivotal. Denote by $B_{e, f}$ the event that $e$ is occupied and pivotal for $A_n$ in $\omega^x_{\nei(x) \backslash \{e, f\}}$. We have
  \begin{equation}
    \Proba(x \pivotal A_n) \le \sum\limits_{e, f \in \nei(x), e \ne f} \Proba(B_{e, f})
  \end{equation}
  and, by independence and inclusion,
  \begin{multline}
    q (1-p)^{2d-2} \Proba(B_{e, f}) = \Proba(B_{e, f}, \omega(x) = 1, \forall g \in \nei(x) \backslash \{e, f\}, \omega(g) = 0) \\
    \le \Proba(e \occ, e \pivotal A_n) = p \Proba(e \pivotal A_n).
  \end{multline}
  Therefore,
  \begin{equation}
    \Proba(x \pivotal A_n) \le \frac {(2d-1)p} {q (1-p)^{2d-2}} \sum\limits_{e \in \nei(x)} \Proba(e \pivotal A_n).
    \label{eq:x-pivotal-interior}
  \end{equation}

  Let $x \in \{0\} \cup \partial L_n \cap V_0$ and $\omega \in \Omega$ such that $x$ is pivotal. Then, in $\omega^x$, there is a vertex-self-avoiding path from $0$ to $\partial L_n$ that goes through $x$. It goes through exactly one of its neighbours $e \in \nei(x) \subset V_1$. In $\omega^x_{\nei(x) \backslash \{e\}}$, we have that $e$ is occupied and pivotal. We denote $B_e$ the event that $e$ is occupied and pivotal in $\omega^x_{\nei(x) \backslash \{e\}}$. We have
  \begin{equation}
    \Proba(x \pivotal A_n) \le \sum\limits_{e \in \nei(x)} \Proba(B_e)
  \end{equation}
  and
  \begin{multline}
    q (1-p)^{2d-1} \Proba(B_e) = \Proba(B_e, \omega(x) = 1, \forall g \in \nei(x) \backslash \{e\}, \omega(g) = 0) \\
    \le \Proba(e \occ, e \pivotal A_n) = p \Proba(e \pivotal A_n).
  \end{multline}
  Therefore,
  \begin{equation}
    \Proba(x \pivotal A_n) \le \frac p {q(1-p)^{2d-1}} \sum\limits_{e \in \nei(x)} \Proba(e \pivotal A_n).
    \label{eq:x-pivotal-border}
  \end{equation}
  We combine \eqref{eq:x-pivotal-border} and \eqref{eq:x-pivotal-interior} to get the desired result.
\end{proof}

\begin{SCfigure}
  \input{figs/vertex-pivotal-so-edges-too}
  \caption{$L_6$ (remember that boxes are twice as small) and the connected component of the origin. We consider the configuration on the left. For the event $0 \lra \partial L_6$, the vertex $x$ is pivotal. If $x$ is occupied and we keep only two of the neighbouring edges, such as $e_1$ and $e_2$, we can make them pivotal (right). Note that several choices of $e_1$ and $e_2$ are possible.}
  \label{fig:vertex-pivotal-so-edges-too}
\end{SCfigure}

\begin{remark}
  Although not necessary and more intricate, the second inequality can be improved by not summing over all $f \in \nei(x) \backslash \{e\}$ at the cost of sometimes choosing a not so good $f$, as described below. If $p \ge \frac 1 {2d-1}$, this is an improvement. Note that if $p < \frac 1 {2d-1}$ then we are in the subcritical phase, as the BFS from $0$ in the original graph is bounded by a Galton-Watson process with offspring distribution $\mathrm{Bin}(2d-1, p)$.
\end{remark}

\begin{lemma}
  With the assumptions of Lemma~\ref{lem:ineq-pivot-site-bond},
  \begin{equation}
    \Proba(x \pivotal A_n) \le \frac p {q (1-p)^{2d-2} (p \wedge (1-p))} \sum\limits_{e \in \nei(x)} \Proba(e \pivotal A_n).
  \end{equation}
\end{lemma}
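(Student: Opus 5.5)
The plan is to refine the argument for \eqref{eq:x-pivotal-interior} so that, for each $e$, only a single partner $f$ is used instead of a sum over all $f\in\nei(x)\setminus\{e\}$. The boundary case $x\in\{0\}\cup(\partial L_n\cap V_0)$ is already covered by \eqref{eq:x-pivotal-border}. Its constant $\frac{p}{q(1-p)^{2d-1}}$ is at most the claimed one, since $(1-p)\ge p\wedge(1-p)$. So I only treat $x\in L_{n-1}\cap V_0\setminus\{0\}$.

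Fix an order on $\nei(x)$. On the event $\{x\pivotal A_n\}$, let $e$ be the smallest $g\in\nei(x)$ such that $g$ is occupied and $0\lra g$ without using $x$. Such a $g$ exists: in $\omega^x$ every open path from $0$ to $\partial L_n$ passes through $x$, hence enters it via an occupied neighbour. Let $f$ be the smallest $g\in\nei(x)\setminus\{e\}$ such that $g$, if occupied, would be connected to $\partial L_n$ without using $x$. This choice must be made carefully: $f$ should depend only on $\omega$ off $\{x\}\cup\nei(x)$, for instance by asking for a path from a neighbour of $g$ in $V_0\setminus\{x\}$ to $\partial L_n$. Then $e$ is determined by $\omega$ off $\{x\}\cup(\nei(x)\setminus\{e\})$ together with $\omega(e)=1$. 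The events $C_{e,f}=\{x\pivotal A_n,\ e,f\text{ chosen}\}$ are disjoint, and
\begin{equation*}
\Proba(x\pivotal A_n)\le\sum_{e}\sum_{f}\Proba(C_{e,f}).
\end{equation*}

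Next, consider the modification $\omega\mapsto\omega'=\omega^{\{x,f\}}_{\nei(x)\setminus\{e,f\}}$. In $\omega'$ the site $e$ is occupied and pivotal for $A_n$. Indeed, $0\lra e$ off $x$ and $f\lra\partial L_n$ off $x$, so $\omega'\in A_n$. Moreover, pivotality of $x$ in $\omega$ forbids any crossing avoiding $x$. After closing all other neighbours of $x$, every crossing must therefore use $e,x,f$, and since $e$ is the only open entry into $x$ from the $0$-side, closing $e$ kills the crossing. The event $D_{e,f}$, namely that the configuration off $\{x\}\cup\nei(x)$ is compatible with these choices and that $\omega(e)=1$, contains $C_{e,f}$. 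It is independent of the states of $x$ and of $\nei(x)\setminus\{e\}$. Hence
\begin{equation*}
q\,p\,(1-p)^{2d-2}\,\Proba(D_{e,f})\le p\,\Proba(e\pivotal A_n),
\end{equation*}
where the left side imposes $\omega(x)=1$, $\omega(f)=1$, and $\omega(g)=0$ for the $2d-2$ other neighbours.

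The crucial gain is that for fixed $e$ the events $D_{e,f}$ are disjoint in $f$, because $f$ is a deterministic function of $\omega$ off $\{x\}\cup\nei(x)$. Summing over $f$ thus costs nothing, and we get
\begin{equation*}
\Proba(x\pivotal A_n)\le\frac{1}{q(1-p)^{2d-2}}\sum_{e\in\nei(x)}\Proba(e\pivotal A_n).
\end{equation*}
This is at most the claimed bound, since $\frac1{(1-p)^{2d-2}}\le\frac{p}{(1-p)^{2d-2}(p\wedge(1-p))}$: if $p\le\frac12$ the two sides are equal, and if $p>\frac12$ the right side is $\frac{p}{(1-p)^{2d-1}}\ge\frac1{(1-p)^{2d-2}}$.

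The main obstacle is making $f$ (and $e$) measurable with respect to the right coordinates, so that the disjointness-in-$f$ and independence arguments are legitimate. If that cannot be done cleanly, the fallback is the cruder two-case scheme suggested by the $p\wedge(1-p)$ in the statement. If the chosen $f$ is already occupied, flip only $x$ and the others, at cost $q(1-p)^{2d-2}$. If $f$ is closed, open it at an extra cost $p/(1-p)$. Either way the loss is at most $1/(p\wedge(1-p))$ times the basic factor, which gives exactly the stated constant.
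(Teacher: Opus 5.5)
Your argument is correct and is essentially the paper's. The boundary case is \eqref{eq:x-pivotal-border}, and for interior $x$ the paper likewise attaches to each $e\in\nei(x)$ a single partner $F_e$ depending only on $\omega$ off $\{x\}\cup\nei(x)$: it defines $F_e$ directly as the smallest $f$ for which $e$ is pivotal in $\omega^{\{x,e,f\}}_{\nei(x)\setminus\{e,f\}}$, so the pivotality you check by hand is built into the definition. It then deduces $q(1-p)^{2d-2}\Proba(x \pivotal A_n)\le\sum_{e\in\nei(x)}\Proba(e \pivotal A_n)$ from disjointness of $\{F_e=f\}$ in $f$, and takes the maximum with the boundary constant exactly as you do.

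Two small points. First, your aside that $e$ is determined by $\omega$ off $\{x\}\cup(\nei(x)\setminus\{e\})$ together with $\omega(e)=1$ is false, since minimality of $e$ depends on the smaller neighbours of $x$; however, it is never used. Second, $D_{e,f}$ must be taken as the set of $\omega$ with $\omega(e)=1$ whose restriction off $\{x\}\cup\nei(x)$ agrees with that of some configuration in $C_{e,f}$. Only then does it retain the absence of a crossing avoiding $x$, on which your pivotality check relies. With these points, the fallback paragraph is unnecessary.
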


\begin{proof}
  We fix $x \in V_0 \cap \Lambda_{n-1} \backslash \{0\}$ and consider a total order on the finite set $\nei(x)$. For $e \in \nei(x) \subset V_1$ we define $$F_e = \min \{f \in \nei(x) \backslash \{e\} : e \pivotal A_n \text{ in } \omega^{\{x, e, f\}}_{\nei(x)\backslash\{e, f\}}\}.$$ If this set is empty, we define $F_e = \infty$. Then $F_e$ is independent of $\omega(x)$ and $\omega(f)$ for $f \in \nei(x)$.\smallskip

  If $\omega \in \Omega$ makes $x$ pivotal, then, as for the proof of Lemma~\ref{lem:ineq-pivot-site-bond}, it has two neighbours $e, f$ occupied and pivotal in $\omega^x_{\nei(x)\backslash\{e, f\}}$. In particular, $e$ is occupied and $F_e \ne \infty$ ($f$ is a possible value of $F_e$). Therefore,
  \begin{equation}
    \Proba(x \pivotal A_n) \le \sum\limits_{e \in \nei(x)} \Proba(F_e \ne \infty, e \occ) = p \sum\limits_{e \in \nei(x)} \Proba(F_e \ne \infty).
  \end{equation}
  Moreover, for $e \in \nei(x)$, if $x$ is occupied, $F_e \ne \infty$, $F_e$ is occupied and every vertex in $\nei(x) \backslash \{e, F_e\}$ is empty, then $e$ is pivotal (by definition of $F_e$). Thus,
  \begin{align}
    \Proba(e \pivotal A_n) & \ge \sum\limits_{f \in \nei(x) \backslash \{e\}} \Proba(F_e = f, \omega(x) = 1, \omega(f) = 1, \forall f' \in \nei(x) \backslash \{e, f\}, \omega(f') = 0) \\
                           & = \Proba(F_e \ne \infty) q p (1-p)^{2d-2}.
  \end{align}
  Hence
  \begin{equation}
    q(1-p)^{2d-2} \Proba(x \pivotal A_n) \le \sum\limits_{e \in \nei(x)} \Proba(e \pivotal A_n).
  \end{equation}
  Combining this with \eqref{eq:x-pivotal-border} ends the proof.
\end{proof}

\subsection{Phase transition along curves}

We now connect results to formulate a theorem. Let $u, v$ be continuous functions from $]0, 1[^2 \to \R_+^*$ such that $\forall p, q \in ]0, 1[, \forall n \in \N^*,$ it holds that
\begin{align}
  \partial_p \theta_n & \le u(p, q) \partial_q \theta_n, \\
  \partial_q \theta_n & \le v(p, q) \partial_p \theta_n.
\end{align}
According to Lemma~\ref{lem:ineq-pivot-site-bond} and Theorem~\ref{thm:Russo}, we can choose
\begin{align}
  u(p, q) & = 2d \frac qp, \\
  v(p, q) & = \frac {2(2d-1)p}{q(1-p)^{2d-1}}.
\end{align}

\begin{theorem}
  Let $\gamma : [0, 1] \to [0, 1]^2$ be a continuous path such that $\gamma$ is of class $C^1$ on $]0, 1[$ and $\gamma(]0, 1[) \subset ]0, 1[^2$. We assume that for all $t \in ]0, 1[$ one of the following inequalities holds:
  \begin{equation}
    \gamma_q'(t) \ge \max \{0, - u(\gamma(t)) \gamma_p'(t)\},
    \label{eq:condition-gammaq}
  \end{equation}
  or
  \begin{equation}
    \gamma_p'(t) \ge \max \{0, - v(\gamma(t)) \gamma_q'(t)\}.
    \label{eq:condition-gammap}
  \end{equation}
  Then for all $n \in \N$ the function $\theta_n \circ \gamma$ is nondecreasing. Moreover, if there exists $t_c \in ]0, 1[$ such that $\theta(\gamma(t)) = 0$ for all $0 \le t < t_c$, and  $\theta(\gamma(t)) > 0$ for all $t_c < t \le 1$, and at $t_c$ one of the aforementioned inequalities holds without equality, then
  \begin{itemize}
    \item for $0 \le t < t_c$, there exists $c_t > 0$ such that for any $n$ large enough, $\theta_n(\gamma(t)) \le \exp(-c_t n)$,
    \item there exists $c > 0$ and $\delta > 0$ such that  $\theta(\gamma(t)) \ge c (t - t_c)$ for all $t_c \le t \le t_c + \delta$.
  \end{itemize}
  \label{thm:curve}
\end{theorem}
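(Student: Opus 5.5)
The proof follows the OSSS strategy of \cite{drt17}: first show monotonicity of $\theta_n\circ\gamma$ from the chain rule, then derive a differential inequality of the form \eqref{eq:3.1} along the curve near $t_c$, and finally apply Lemma~\ref{lem:3.1} on a small interval around $t_c$.

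\textbf{Monotonicity.} Since $\theta_n$ depends on finitely many sites, the chain rule gives
\begin{equation*}
  (\theta_n\circ\gamma)'(t) = \gamma_p'(t)\,\partial_p\theta_n + \gamma_q'(t)\,\partial_q\theta_n .
\end{equation*}
Both partial derivatives are nonnegative by Russo's formula. Suppose \eqref{eq:condition-gammaq} holds. If $\gamma_p'\ge 0$, both terms are nonnegative. If $\gamma_p'<0$, we use $\partial_p\theta_n\le u\,\partial_q\theta_n$ to get
\begin{equation*}
  (\theta_n\circ\gamma)' \ge (\gamma_q' + u\gamma_p')\,\partial_q\theta_n \ge 0 .
\end{equation*}
The case \eqref{eq:condition-gammap} is symmetric, using $v$.

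\textbf{Differential inequality.} Assume one of the inequalities is strict at $t_c$, say $\gamma_q'(t_c) > \max\{0,-u\gamma_p'\}$. By continuity of $\gamma'$, $u$ and $\gamma$ there is an interval $I=[t_c-\delta,t_c+\delta]$ and $\varepsilon>0$ with $\gamma_q' + u\gamma_p' \ge \varepsilon$ and $\gamma_q'\ge\varepsilon$ on $I$. Splitting into the cases $\gamma_p'\ge0$ and $\gamma_p'<0$ as above gives $(\theta_n\circ\gamma)'\ge \varepsilon\,\partial_q\theta_n$ on $I$. To get control of the whole sum over $V$, use $\partial_p\theta_n\le u\,\partial_q\theta_n$ again:
\begin{equation*}
  \partial_q\theta_n \ge \tfrac{1}{1+u}(\partial_p\theta_n+\partial_q\theta_n).
\end{equation*}
By Theorem~\ref{thm:Russo} and Lemma~\ref{lem:cov}, $\partial_p\theta_n+\partial_q\theta_n$ dominates a constant times $\sum_{u\in V}\Cov(\ind_{A_n},\omega(u))$, the constant being bounded below by $\min\{1/(p(1-p)),1/(q(1-q))\}\ge 4$. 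The bound on this covariance sum obtained just before Lemma~\ref{lem:ineq-pivot-site-bond} then gives
\begin{equation*}
  (\theta_n\circ\gamma)' \ge C\,\frac{n}{\sum_{k<n}\theta_k}\,\theta_n(1-\theta_n) \quad\text{on } I,
\end{equation*}
with $C>0$ uniform on $I$. This holds because $\gamma(I)$ is a compact subset of $]0,1[^2$, where $p\wedge q$ and $1/(1+u)$ are bounded below. The other strict case is identical, with the roles of $p$ and $q$ swapped and $v$ in place of $u$.

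\textbf{Applying Lemma~\ref{lem:3.1}.} It remains to bound $1-\theta_n$ from below. Monotonicity gives $\theta_n(\gamma(t))\le\theta_1(\gamma(t_c+\delta))<1$ on $I$, since $\gamma(I)\subset]0,1[^2$ and $n\ge1$, which requires a neighbouring site to be occupied. So $1-\theta_n\ge c'>0$ uniformly. Reparametrise $s=t-(t_c-\delta)$ and set $f_n = c''\theta_n\circ\gamma$ for a suitable constant $c''$, chosen to absorb $C c'$ so that \eqref{eq:3.1} holds exactly. The $f_n$ are nondecreasing and differentiable, and they converge to $c''\theta\circ\gamma$ because $A_n\downarrow\{0\lra\infty\}$. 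Lemma~\ref{lem:3.1} yields a threshold $t_1\in I$ with exponential decay below $t_1$ and $\theta\circ\gamma\ge c(t-t_1)$ above $t_1$. Since $\theta\circ\gamma>0$ for $t>t_c$ and $=0$ for $t<t_c$, we must have $t_1=t_c$. Exponential decay for $t<t_c-\delta$ then follows from monotonicity of $\theta_n\circ\gamma$.

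The main obstacle is the differential inequality: handling the sign of $\gamma_p'$ while keeping all constants uniform on a neighbourhood of $t_c$, and matching the rescaled boxes $L_n$ and the scaling constant with the exact form required by Lemma~\ref{lem:3.1}.
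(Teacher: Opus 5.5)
Your proposal is correct and follows essentially the same route as the paper. It uses the chain rule together with $\partial_p\theta_n\le u\,\partial_q\theta_n$ for monotonicity, then a perturbed version near $t_c$ giving $(\theta_n\circ\gamma)'\ge\frac{\varepsilon}{1+u}(\partial_p\theta_n+\partial_q\theta_n)$, then Russo's formula with Lemma~\ref{lem:cov} and the site-percolation OSSS bound, and finally Lemma~\ref{lem:3.1} applied to a rescaled $\theta_n\circ\gamma$ on a window around $t_c$. The differences are only cosmetic: you split on the sign of $\gamma_p'$, you bound $1-\theta_n$ by monotonicity along $\gamma$ rather than by $\theta_1(1-\varepsilon,1-\varepsilon)$, and you check explicitly that the threshold produced by Lemma~\ref{lem:3.1} equals $t_c$, which the paper leaves implicit.
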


\begin{proof}
  Let $t \in ]0, 1[$. By the chain rule, we have
  \begin{equation}
    (\theta_n \circ \gamma)'(t) = \gamma_p'(t) \partial_p \theta_n(\gamma(t)) + \gamma_q'(t) \partial_q \theta_n(\gamma(t)).
  \end{equation}
 As  $\theta_n$ is the probability of an increasing event, we have $\partial_p \theta_n, \partial_q \theta_n \ge 0$. \smallskip

  Assume that \eqref{eq:condition-gammaq} holds. Then
  \begin{align}
    (\theta_n \circ \gamma)'(t) & \ge \gamma_p'(t) \partial_p \theta_n(\gamma(t)) + \frac {\gamma_q'(t)} {u(\gamma(t))} \partial_p \theta_n(\gamma(t))) \\
                                & \ge \gamma_p'(t) \partial_p \theta_n(\gamma(t)) - \frac {u(\gamma(t)) \gamma_p'(t)} {u(\gamma(t)} \partial_p \theta_n(\gamma(t)) = 0.
  \end{align}
  The case where \eqref{eq:condition-gammap} holds is similar. Therefore, $\theta_n \circ \gamma$ is nondecreasing.\smallskip

  We shall now prove the second part of the theorem. We assume the existence of $t_c$ as described in the theorem and that \eqref{eq:condition-gammaq} holds without equality at $t_c$ (the case where it is \eqref{eq:condition-gammap} is similar).
  By the continuity of $u, \gamma, \gamma'$, there exist $\varepsilon, \delta > 0$ such that $\delta < \min \{t_c, 1-t_c\}$ and for all $t \in [t_c-\delta, t_c+\delta]$
  \begin{itemize}
    \item $\gamma_q'(t) \ge \max \{0, -u(\gamma(t)) \gamma_p'(t)\} + \varepsilon$,
    \item $\gamma(t) \in [\varepsilon, 1-\varepsilon]^2$.
  \end{itemize}
  Let $t \in [t_c-\delta, t_c+\delta]$. We use the same inequalities as above, but with some perturbations.
  \begin{align}
    & (\theta_n \circ \gamma)'(t)  = \gamma_p'(t) \partial_p \theta_n(\gamma(t)) + \gamma_q'(t) \partial_q \theta_n(\gamma(t)) \\
                                & \ge \gamma_p'(t) \partial_p \theta_n(\gamma(t)) + \frac 1 {1 + u(\gamma(t))} \varepsilon \partial_q \theta_n(\gamma(t)) + \left( \gamma_q'(t) - \frac 1 {1 + u(\gamma(t))} \varepsilon \right) \frac 1 {u(\gamma(t))} \partial_p \theta_n(\gamma(t)) \\
                                & \ge \gamma_p'(t) \partial_p \theta_n(\gamma(t)) + \frac 1 {1 + u(\gamma(t))} \varepsilon \partial_q \theta_n(\gamma(t)) + \left( -u(\gamma(t)) \gamma_p'(t) + \varepsilon - \frac 1 {1 + u(\gamma(t))} \varepsilon \right) \frac 1 {u(\gamma(t))} \partial_p \theta_n(\gamma(t)) \\
                                & = \frac \varepsilon {1 + u(\gamma(t))} (\partial_p \theta_n(\gamma(t)) + \partial_q \theta_n(\gamma(t))).
  \end{align}
  Now, we can continue the proof as in \cite{drt17}. By Theorem~\ref{thm:Russo}, Lemma~\ref{lem:cov} and Lemma~\ref{lem:4.6-site},
  \begin{align}
    (\theta_n \circ \gamma)'(t) & \ge \frac \varepsilon {1 + u(\gamma(t))} \sum\limits_{x \in V} \Proba(x \pivotal A_n) \\
                                & \ge \frac \varepsilon {1 + u(\gamma(t))} \sum\limits_{x \in V} 4 \Cov(\omega(x), \ind_{A_n}) \\
                                & \ge \frac {4 \varepsilon} {1 + u(\gamma(t))} \cdot \frac {n (\gamma_p(t) \wedge \gamma_q(t))} {6 \sum\limits_{k=0}^{n-1} \theta_k(\gamma(t))} \theta_n(\gamma(t)) (1 - \theta_n(\gamma(t))).
  \end{align}
  Note that the following bounds hold:
  \begin{itemize}
    \item $\gamma_p(t) \wedge \gamma_q(t) \ge \varepsilon > 0$
    \item $u(\gamma(t)) \le \max_{s \in [t_c - \delta, d_c + \delta]} u(\gamma(s)) < + \infty$
    \item $1 - \theta_n(\gamma(t)) \ge 1 - \theta_1(1 - \varepsilon, 1 - \varepsilon) > 0$
  \end{itemize}
  Therefore, there is a constant $c>0$ such that for all $t \in [t_c - \delta, t_c + \delta]$
  \begin{equation}
    (\theta_n \circ \gamma)'(t) \ge c \frac n {\sum\limits_{k=0}^{n-1} (\theta_k \circ \gamma)(t)} (\theta_n \circ \gamma)(t).
  \end{equation}
  Then we just need to apply Lemma~\ref{lem:3.1} to the sequence $\left(\frac 1c \theta_n \circ \gamma \right)_{n \in \N}$ (we get the exponential decay on the whole $[0, t_c[$ using the monotonicity of $\theta_n \circ \gamma$).
\end{proof}

\begin{remark}
  Denote $p_{0, c}, q_{0, c}$ the critical probabilities for the standard bond and site percolation on $\Z^d$. If $p<p_{0, c}$ or $q<q_{0, c}$ then we have exponential decay. If $p=1, q>q_{0, c}$ or $q=1, p>p_{0, c}$ then we have site or bond supercritical percolation. Therefore, we may want to extend this theorem to phase transitions occuring at $(1, q_{0, c})$ or $(p_{0, c}, 1)$ (but not necessarily along the edge of the box $[0, 1]^2$).
\smallskip

  It is sufficient that $u$ or $v$ can be extended continuously to a neighboorhood of this point (for the functions $u, v$ we found, it is the case for $u$ at both points and for $v$ at $(p_{0, c}, 1)$) and that the corresponding inequality, \eqref{eq:condition-gammaq} or \eqref{eq:condition-gammap}, holds without equality at the critical point. This second condition is almost empty for $t_c \in ]0, 1[$ because $\gamma_p(t_c) = 1 \implies \gamma_p'(t_c) = 0$ (and the same for the other component), which means that we just do standard bond or site Bernoulli percolation, with some small perturbation.\smallskip

  However on the boundary it also makes sense to have $t_c \in \{0, 1\}$. We can get $\theta(\gamma(t)) \ge ct$ for curves that start at $(1, q_{0, c})$ or $(p_{0, c}, 1)$ and exponential decay for curves that end at one of these points (provided that they respect the slope constraints). Note that exponential decay is less interesting because it can be proved using other curves, or as part of a well-defined subcritical area (see Corollary~\ref{coro:phase-sep-pq}).
\end{remark}

\subsection{Regularity of the critical curve}

Along the lines $p=p_0$, the $\theta_n$ are nondecreasing and we have a sharp phase transition.
Therefore, we define the critical curve as the function $q_c(p_0)$ equal to the $q$-component of the critical point on the line $p = p_0$, for any $p_0 \ge p_{0, c}$ (with $p_{0, c}$ the critical probability of Bernoulli bond percolation).

\begin{theorem}
  $q_c$ is decreasing and locally-Lipschitz (hence continuous). Moreover, for all $p_{0, c} < p_1 < p_2 < 1$,
  \begin{equation}
    - \int\limits_{p_1}^{p_2} u(p, q_c(p)) \diff p
    \le q_c(p_2) - q_c(p_1)
    \le - \int\limits_{p_1}^{p_2} \frac {\diff p} {v(p, q_c(p))}.
  \end{equation}
  \label{thm:qc}
\end{theorem}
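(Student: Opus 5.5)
The plan is to compare the critical curve with solutions of the two ODEs $q' = -u(p,q)$ and $q' = -1/v(p,q)$, using the monotonicity part of Theorem~\ref{thm:curve}. Paths $\gamma$ satisfying \eqref{eq:condition-gammaq} or \eqref{eq:condition-gammap} make every $\theta_n\circ\gamma$ nondecreasing. Passing to the limit $n\to\infty$ (since $\theta_n \downarrow \theta$ pointwise), $\theta\circ\gamma$ is nondecreasing as well. Such a path can therefore never go from the supercritical region $\{\theta>0\}$ into the subcritical region $\{\theta=0\}$. Recall that on each vertical line $p=p_0$ the function $\theta$ is nondecreasing in $q$ and vanishes exactly for $q<q_c(p_0)$ (up to the value at $q_c$ itself).

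\textbf{Monotonicity of $q_c$.} Take $p_1<p_2$. The horizontal segment $t\mapsto (p_1 + t(p_2-p_1), q)$ satisfies \eqref{eq:condition-gammap}, since $\gamma_q'=0$ and $\gamma_p'>0$. Hence $\theta(p_2,q)\ge\theta(p_1,q)$, which gives $q_c(p_2)\le q_c(p_1)$. For strict decrease I will use the ODE comparison below: the upper bound shows $q_c(p_2)-q_c(p_1) \le -\int 1/v<0$, because $v$ is finite and positive.

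\textbf{Upper bound.} For $p\ge p_1$, let $g$ solve $g'(p) = -1/v(p,g(p))$ with $g(p_1)=q_1$, where $q_1>q_c(p_1)$. The path $p\mapsto(p,g(p))$ satisfies \eqref{eq:condition-gammap} with equality: $1\ge -v\cdot(-1/v)$. It starts in the supercritical region, so $\theta(p,g(p))>0$ and hence $g(p)\ge q_c(p)$. Symmetrically, solving $h'=-u(p,h)$ with $h(p_2)=q_2<q_c(p_2)$ and running backwards, \eqref{eq:condition-gammaq} holds with equality. A subcritical endpoint then forces the whole path to be subcritical, so $h(p)\le q_c(p)$ for $p\in[p_1,p_2]$. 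These one-sided barriers immediately give local Lipschitz bounds. Indeed, $u$ and $1/v$ are bounded on compacts of $]0,1[^2$, and $q_c$ stays in a compact range for $p$ in a compact subinterval of $]p_{0,c},1[$, since $q_c(p)<1$ there. So $|q_c(p_2)-q_c(p_1)|\le C|p_2-p_1|$. Finally, let $q_1\downarrow q_c(p_1)$. Continuous dependence of the ODE solution on initial data (with $v$ locally Lipschitz, or via a Peano/compactness argument) yields $q_c(p)\le g(p)$, where now $g(p_1)=q_c(p_1)$.

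\textbf{Passing to the integral forms.} To get exactly $\int 1/v(p,q_c(p))\,dp$ rather than the integral along $g$, I will argue infinitesimally. Applying the barrier from each starting point $p$ gives the upper Dini derivative bound $D^+q_c(p)\le -1/v(p,q_c(p))$, and similarly $D^-q_c(p)\ge -u(p,q_c(p))$. Since $q_c$ is Lipschitz, it is absolutely continuous, so integrating these pointwise derivative bounds between $p_1$ and $p_2$ gives the stated inequalities.

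The main obstacle is this last step, the Dini derivative estimate, because $g$ is only controlled locally. For small $h$ I will compare $g(p+h)=q_c(p)-h/v(p,q_c(p))+o(h)$, using continuity of $v$. Alternatively, one could avoid ODE solutions entirely by using straight segments with slope $-1/v(p,q_c(p))+\epsilon$, adjusted so the path inequalities hold strictly near the point. A second technicality is the strictness issue at the boundary value $q=q_c$, where $\theta$ may or may not vanish; the perturbation $q_1>q_c$ handles it.
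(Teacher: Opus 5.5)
\textbf{There is a genuine gap: your $u$-barrier is oriented the wrong way.} Your $v$-barrier is correct. Along $p\mapsto(p,g(p))$ with $p$ increasing, \eqref{eq:condition-gammap} holds with equality, so supercriticality at $(p_1,q_1)$ propagates to every $p\ge p_1$ and gives $q_c\le g$. The ``symmetric'' $u$-barrier, however, does not work. For a curve with $h'=-u(p,h)$, the only admissible orientation is the one with $p$ decreasing: then $\gamma_p'=-1$, $\gamma_q'=u$, and \eqref{eq:condition-gammaq} holds with equality. With $p$ increasing, \eqref{eq:condition-gammaq} fails because $\gamma_q'<0$, and \eqref{eq:condition-gammap} would require $uv\le 1$. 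But $\partial_p\theta_n\le u\,\partial_q\theta_n\le uv\,\partial_p\theta_n$ forces $uv\ge 1$, and for the paper's choice $uv=4d(2d-1)/(1-p)^{2d-1}>1$. In the admissible orientation, $(p_2,q_2)$ is the \emph{initial} point of the path. Monotonicity of $\theta_n\circ\gamma$ only transports subcriticality from later to earlier times, i.e.\ towards larger $p$, so it says nothing about $(p,h(p))$ for $p<p_2$. Hence ``$h\le q_c$ on $[p_1,p_2]$'' does not follow, and it cannot hold in general. The conclusion gets stronger when $u$ is replaced by $Ku$ with $K>1$, although that replacement only weakens the hypothesis $\partial_p\theta_n\le u\,\partial_q\theta_n$. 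For large $K$ it would force $q_c$ to reach $1$ arbitrarily close to the left of $p_2$, which is absurd since $q_c<1$ on $]p_{0,c},1[$. Note also that, as written, both of your barriers bound $q_c(p_2)-q_c(p_1)$ from \emph{above}. Nothing in your argument controls how fast $q_c$ can decrease, so the local Lipschitz estimate and the lower bound $-\int u(p,q_c(p))\,\diff p$ are unsupported. Your stated Dini bound $D^-q_c\ge -u$ is not what your barrier gives; it would give the reverse inequality.

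\textbf{How to fix it.} Swap the roles of sub- and supercritical. Take $q_2>q_c(p_2)$, so that $(p_2,q_2)$ is a supercritical starting point for the leftward path. Then $\theta(p,h(p))>0$, hence $h\ge q_c$ on $[p_1,p_2]$, and letting $q_2\downarrow q_c(p_2)$ gives $q_c(p_1)-q_c(p_2)\le\int_{p_1}^{p_2}u(p,h(p))\,\diff p$. Equivalently, start from a subcritical value $q_1<q_c(p_1)$ at the \emph{left} endpoint, so that it is the terminal point of the leftward path, and conclude $h\le q_c$ for $p\ge p_1$. These are exactly the two segments of slope $-(u(p_0,q_c(p_0))+\varepsilon)$ used in the paper. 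With this correction your proof goes through and is essentially the paper's argument, with two differences. You use ODE solutions instead of straight segments, which yields the Lipschitz bound on a whole interval at once rather than near each point. You use absolute continuity of a Lipschitz function instead of the paper's Fatou argument on difference quotients. A minor point: you use $q_c<1$ on $]p_{0,c},1[$ without proof. It follows from a $v$-barrier started at the supercritical boundary point $(p',1)$ with $p_{0,c}<p'<p_1$.
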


\begin{coro}
  The following statements hold.
  \begin{itemize}
    \item There is exponential decay in the open set $\{(p, q) \in [0, 1]^2: (p < p_{0, c}) \vee (q < q_c(p))\}$ and $\theta > 0$ in the open set $\{(p, q) \in [0, 1]^2: (p > p_{0, c}) \wedge (q > q_c(p))\}$.
    \item Let $\gamma: [0, 1] \to [0, 1]^2$ a continuous path such that there exists $t_c \in ]0, 1[$ with $\theta(\gamma(t)) = 0$ for all $t \in [0, t_c[$, and $\theta(\gamma(t)) > 0$ for all $t \in ]t_c, 1]$. Then $\gamma_p(t_c) \ge p_{0, c}$ and $\gamma_q(t_c) = q_c(\gamma_p(t_c))$.
  \end{itemize}
  \label{coro:phase-sep-pq}
\end{coro}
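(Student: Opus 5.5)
We treat the two bullet points separately, relying on Theorem~\ref{thm:qc} (continuity and strict monotonicity of $q_c$ on $]p_{0,c},1[$), on Theorem~\ref{thm:curve} applied to straight lines, and on the monotonicity of $\theta$ and $\theta_n$ in each parameter.

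\textbf{First bullet.} If $p<p_{0,c}$, then $\theta_n(p,q)\le\theta_n(p,1)$. The right-hand side is the connection probability for Bernoulli bond percolation, so it decays exponentially by the classical sharpness result (Aizenman--Barsky, Menshikov, or \cite{drt17}). If $p\ge p_{0,c}$ and $q<q_c(p)$, we use the vertical line $\gamma(t)=(p,t)$. It satisfies \eqref{eq:condition-gammaq} strictly, since $\gamma_q'=1>0=\max\{0,-u\gamma_p'\}$. By the definition of $q_c(p)$ as the critical point on that line and Theorem~\ref{thm:curve}, exponential decay holds for $q<q_c(p)$.

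The case $p=p_{0,c}$ needs care. There I would show that $q_c(p)\to 1$ as $p\downarrow p_{0,c}$, or else simply note that the $t_c$ on that line is $1$, so every $q<1$ is subcritical. Either way the same decay argument applies.

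For openness:
\begin{itemize}
\item On $]p_{0,c},1[$, the set $\{q<q_c(p)\}$ is open because $q_c$ is continuous by Theorem~\ref{thm:qc}.
\item Near $p=p_{0,c}$, I would take the boundary condition $\lim_{p\downarrow p_{0,c}} q_c(p)=1$. This follows from monotonicity of $q_c$ together with the fact that $\theta(p_{0,c},q)=0$ for $q<1$, which comes from sharpness of the transition along the line $q=\text{const}$ (that line satisfies \eqref{eq:condition-gammap}).
\item The edge $p=1$ is handled analogously, since $q_c(1)=q_{0,c}$ by continuity.
\end{itemize}
For the supercritical set, take $p>p_{0,c}$ and $q>q_c(p)$. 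Then $\theta(p,q)>0$ by the definition of $q_c$ and monotonicity in $q$. This set is open by continuity of $q_c$.

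\textbf{Second bullet.} Let $(p^*,q^*)=\gamma(t_c)$. Suppose first that $q^*<q_c(p^*)$, or that $p^*<p_{0,c}$. Then $\gamma(t_c)$ lies in the open subcritical set, so by continuity of $\gamma$ we have $\gamma(t)$ in that set for $t$ slightly above $t_c$. This forces $\theta(\gamma(t))=0$ there, a contradiction. Hence $p^*\ge p_{0,c}$ and $q^*\ge q_c(p^*)$.

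Suppose next that $q^*>q_c(p^*)$. If also $p^*>p_{0,c}$, the point lies in the open supercritical set, so $\theta(\gamma(t))>0$ for $t$ slightly below $t_c$, again a contradiction. This leaves $q^*=q_c(p^*)$.

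\textbf{Main obstacle.} The delicate part is the boundary behaviour at $p^*=p_{0,c}$, which requires knowing that $q_c(p)\to1$ as $p\downarrow p_{0,c}$ and that $\theta(p_{0,c},q)=0$ for $q<1$. With these in hand, $q_c$ extends by $q_c(p_{0,c})=1$ and $(p_{0,c},1)$ is the only admissible critical point with $p^*=p_{0,c}$.

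I would establish this via Theorem~\ref{thm:curve} on the horizontal line $\gamma(t)=(t,q)$ with $q<1$ fixed, which is legitimate because $v$ is continuous. Its critical point $t_c$ is at least $p_{0,c}$. It is strictly larger, since $\theta(p,q)\le\theta(p,1)$ and comparing with the $q=1$ line via the slope bounds gives a strict shift. So $\theta(p_{0,c},q)=0$.

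To make this rigorous, I expect to use the integral bound of Theorem~\ref{thm:qc} in the limit $p_1\downarrow p_{0,c}$. It is not yet clear to me whether that is enough, or whether an additional argument is needed.
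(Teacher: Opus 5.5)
The paper gives no separate proof of this corollary; it is meant to be read off from Theorem~\ref{thm:curve} (on vertical lines), Theorem~\ref{thm:qc}, and comparison with pure bond and site percolation. Your handling of $p<p_{0,c}$ and of $p_{0,c}<p\le 1$ matches that, and so does your openness-plus-contradiction argument for the second bullet.

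\textbf{The gap is at $p=p_{0,c}$.} It affects both bullets. To get openness of the subcritical set near $\{p_{0,c}\}\times[0,1[$, and to exclude $\gamma(t_c)=(p_{0,c},q^*)$ with $q^*<1$, you need (a) exponential decay at $(p_{0,c},q)$ for all $q<1$ and (b) $\lim_{p\downarrow p_{0,c}}q_c(p)=1$. None of your suggested justifications gives these:
\begin{itemize}
\item On the line $p=p_{0,c}$ the critical parameter is $t_c=1$, which is outside the scope of Theorem~\ref{thm:curve}. Also $\theta=0$ alone does not yield exponential decay. So ``the same decay argument applies'' is unsupported.
\item Sharpness along a horizontal line says nothing about $\theta$ at that line's own critical point. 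So it does not give $\theta(p_{0,c},q)=0$.
\item Even granting $\theta(p_{0,c},\cdot)=0$ on $[0,1[$, monotonicity of $q_c$ does not give (b). A decreasing function can jump at an endpoint, and $\theta$ vanishing on $\{p_{0,c}\}\times[0,1[$ is compatible with $\theta>0$ on $\{p>p_{0,c},\ q>L\}$ for some $L<1$.
\item The integral bounds of Theorem~\ref{thm:qc} only relate values of $q_c$ at interior points. Letting $p_1\downarrow p_{0,c}$ in them gives $q_c(p_2)+\int_{p_1}^{p_2}\frac{\diff p}{v(p,q_c(p))}\le q_c(p_1)\le q_c(p_2)+\int_{p_1}^{p_2}u(p,q_c(p))\,\diff p$, which is consistent with $\lim q_c<1$.
\end{itemize}

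\textbf{How to close it.} Your instinct that the slope bounds give a strict shift relative to the line $q=1$ is right. The tool, however, is the monotonicity part of Theorem~\ref{thm:curve} along a segment ending on the edge $q=1$, where the model is pure bond percolation.

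Put $K:=4d/p_{0,c}$, so that $u(p,q)=2dq/p\le K$ on $\{p\ge p_{0,c}/2\}$. Take $p\ge p_{0,c}$ and $q<1$, and consider the segment from $(p,q)$ to $\bigl(p-(1-q)/K,\,1\bigr)$. Its horizontal extent is at most $p_{0,c}/(4d)$, so it stays in $\{p\ge p_{0,c}/2\}$ and satisfies \eqref{eq:condition-gammaq}. Hence $\theta_n(p,q)\le\theta_n\bigl(p-(1-q)/K,\,1\bigr)$, where the endpoint is handled by continuity of the polynomial $\theta_n$. By classical sharpness for bond percolation, the right-hand side decays exponentially whenever $p<p_{0,c}+(1-q)/K$.

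Consequences:
\begin{itemize}
\item Exponential decay holds on the relatively open set $\{p<p_{0,c}+(1-q)/K\}$, which contains $\{p_{0,c}\}\times[0,1[$. This is (a).
\item It also gives $q_c(p)\ge 1-K(p-p_{0,c})$, hence $q_c(p_{0,c})=1$ and (b).
\end{itemize}
This is the same slope-$u$ segment mechanism the paper uses in the proof of Theorem~\ref{thm:qc}, which it runs for $p_0\in[p_{0,c},1]$ with $u$ extended continuously to $]0,1]^2$. See also the remark after Theorem~\ref{thm:curve} on curves ending at $(p_{0,c},1)$. With this step inserted, the rest of your argument goes through.
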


\begin{remark}
  This proves the existence of well-defined and open subcritical and supercritical phases seperated by a continuous critical curve with exponential decay in the subcritical phase. This critical curve only touches the sides of the square at its endpoints.\smallskip

  Since $q_c$ is decreasing and continuous, it is a bijection $[p_{0, c}, 1] \to [q_{0, c}, 1]$ whose inverse is $q_0 \mapsto p_c(q_0)$ the $p$-component of the critical point on the line $q=q_0$.
  This function $p_c$ is also decreasing and continuous. We can also prove that it is locally Lipschitz on $]q_{0, c}, 1]$\footnote{This time, we do note include $q_{0, c}$ because we may not be able to extend $v$ to $(1, q_{0, c})$.} and that, for all $q_{0, c} < q_1 < q_2 < 1$,
  \begin{equation}
    - \int\limits_{q_1}^{q_2} v(p_c(q), q) \diff q
    \le p_c(q_2) - p_c(q_1)
    \le - \int\limits_{q_1}^{q_2} \frac {\diff q} {u(p_c(q), q)}.
  \end{equation}
\end{remark}

\begin{proof}  
  \begin{SCfigure}
    \centering
    \input{figs/lipschitz}
    \caption{The neighbourhood $V$ (the shaded area) and most objects of interest. The arrowed segments have $\theta_n$ nondecreasing along them. The one that starts above $q_c(p_0)$ has $\theta > 0$ and is therefore above the curve $q = q_c(p)$, while the one that ends below $q_c(p_0)$ has exponential decay and is therefore below the curve $q = q_c(p)$. When $h$ goes to $0$, both segments go to the dashed line.}
    \label{fig:lipschitz}
  \end{SCfigure}

    We choose a function $u$ (but not $v$) that can be continuously extended to $]0, 1]^2$ (for example, using $(p, q) \mapsto u(p, q) \wedge 2d \frac qp$). One may refer to Figure~\ref{fig:lipschitz} for a more visual description of this proof.\smallskip

  Let $p_0 \in [p_{0, c}, 1]$ and $\varepsilon > 0$. There exists $\delta > 0$ such that
  \begin{equation}
    V = [p_0 - \delta, (p_0 + \delta) \wedge 1] \times [q_c(p_0) - (u(p_0, q_c(p_0)) + \varepsilon + 1) \delta, (q_c(p_0) + (u(p_0, q_c(p_0)) + \varepsilon + 1) \delta) \wedge 1] \subset ]0, 1] \times ]0, 1]
  \end{equation}
  and $\forall (p, q) \in V, u(p, q) \le u(p_0, q_c(p_0)) + \varepsilon$.
  Consider $(p_0 - \delta) \vee p_{0, c} \le p_1 < p_0$. Let $\delta>h>0$. Consider the path
  \begin{equation}
    \gamma: t \in [-\delta, \delta] \mapsto (p_0 - t, q_c(p_0) + h + (u(p_0, q_c(p_0)) + \varepsilon) t)
  \end{equation}
  Its restriction to $\gamma^{-1}([0, 1]^2)$ is a segment in $V$, on which $u(p, q) \le u(p_0, q_c(p_0)) + \varepsilon$. We have $\gamma_p' = -1$ and $\gamma_q' = u(p_0, q_c(p_0)) + \varepsilon \ge u \circ \gamma$.
  Therefore, by Theorem~\ref{thm:curve}, $\theta \circ \gamma$ is nondecreasing on $\gamma^{-1}([0, 1]^2)$. In particular, if $\gamma(p_0 - p_1) \in [0, 1]^2$, we have
  \begin{equation}
    \theta(p_1, q_c(p_0) = \theta(\gamma(p_0 - p_1)) \ge \theta(\gamma(0)) = \theta(p_0, q_c(p_0) + h) > 0.
  \end{equation}
  Otherwise, $\gamma_q(p_0 - p_1) \ge 1 \ge q_c(p_1)$. Hence,
  \begin{align}
    q_c(p_0) \le q_c(p_1) & \le \gamma_q(p_0 - p_1) \\
                          & = q_c(p_0) + h + (u(p_0, q_c(p_0)) + \varepsilon) (p_0-p_1) \\
                          & \underset{h \to 0^+}{\longrightarrow} q_c(p_0) + (u(p_0, q_c(p_0)) + \varepsilon) (p_0-p_1).
  \end{align}
  Similarly, for $p_0 < p_1 \le (p_0 + \delta) \wedge 1$, we consider the path
  \begin{equation}
    \gamma: t \in [-\delta, \delta] \mapsto (p_0 - t, q_c(p_0) - h + (u(p_0, q_c(p_0)) + \varepsilon) t).
  \end{equation}
  Its restriction to $\gamma^{-1}([0, 1]^2)$ is a segment in $V$ with the same slope as the previous one, so, according to Theorem~\ref{thm:curve}, $\theta_n \circ \gamma$ is nondecreasing on $\gamma^{-1}([0, 1]^2)$. In particular, if $\gamma(p_0 - p_1) \in [0, 1]^2$, we have exponential decay at $\gamma(0)$ and therefore at $\gamma(p_0 - p_1)$. Otherwise, $\gamma_q(p_0 - p_1) \le 0 \le q_c(p_1)$. Hence,
  \begin{align}
    q_c(p_0) \ge q_c(p_1) & \ge \gamma_q(p_0 - p_1) \\
                          & = q_c(p_0) - h + (u(p_0, q_c(p_0)) + \varepsilon) (p_0-p_1) \\
                          & \underset{h \to 0^+}{\longrightarrow} q_c(p_0) + (u(p_0, q_c(p_0)) + \varepsilon) (p_0-p_1).
  \end{align}
  Therefore, $q_c$ is locally Lipschitz hence continuous, and
  \begin{equation}
    \liminf\limits_{h \to 0} \frac {q_c(p_0 + h) - q_c(p_0)} h \ge - u(p_0, q_c(p_0)).
  \end{equation}
  Using Fatou's lemma, for $p_{0, c} < p_1 < p_2 < 1$, we have
  \begin{equation}
    \liminf\limits_{h \to 0^+} \int\limits_{p_1}^{p_2} \frac 1h (q_c(p+h) - q_c(p)) \diff p
    \ge \int\limits_{p_1}^{p_2} \liminf\limits_{h \to 0^+} \frac 1h (q_c(p+h) - q_c(p)) \diff p
    \ge - \int\limits_{p_1}^{p_2} u(p, q_c(p)) \diff p.
  \end{equation}
  Moreover, since $q_c$ is continuous,
  \begin{equation}
    \liminf\limits_{h \to 0^+} \int\limits_{p_1}^{p_2} \frac 1h (q_c(p+h) - q_c(p)) \diff p
    = \liminf\limits_{h \to 0^+} \frac 1h \left( \int\limits_{p_2}^{p_2+h} q_c - \int\limits_{p_1}^{p_1+h} q_c \right)
    = q_c(p_2) - q_c(p_1).
  \end{equation}
  Similarly, we have the following lower bound. For $1 > p > p_{0, c}$
  \begin{equation}
    \limsup_{h \to 0^+} \frac 1h (q_c(p+h) - q_c(p)) \le - \frac 1 {v(p, q_c(p))}
  \end{equation}
  and, after we integrate between $p_1$ and $p_2$,
  \begin{equation}
    - \int\limits_{p_1}^{p_2} \frac {\diff p} {v(p, q_c(p))} \ge q_c(p_2) - q_c(p_1).
  \end{equation}
  This proves that $q_c$ is decreasing.
\end{proof}

%% file: figs/site-bond.tex
\begin{tikzpicture}[line cap=round,line join=round,>=triangle 45,x=0.5cm,y=0.5cm]
  \clip(-9,-9) rectangle (9,9);
  \draw [line width=1pt] (-8,0)-- (8,0);
  \draw [line width=1pt] (-8,2)-- (8,2);
  \draw [line width=1pt] (-8,4)-- (8,4);
  \draw [line width=1pt] (-8,6)-- (8,6);
  \draw [line width=1pt] (-8,8)-- (8,8);
  \draw [line width=1pt] (-8,-8)-- (8,-8);
  \draw [line width=1pt] (-8,-6)-- (8,-6);
  \draw [line width=1pt] (-8,-4)-- (8,-4);
  \draw [line width=1pt] (-8,-2)-- (8,-2);
  \draw [line width=1pt] (-8,8)-- (-8,-8);
  \draw [line width=1pt] (-6,-8)-- (-6,8);
  \draw [line width=1pt] (-4,8)-- (-4,-8);
  \draw [line width=1pt] (-2,8)-- (-2,-8);
  \draw [line width=1pt] (0,8)-- (0,-8);
  \draw [line width=1pt] (2,8)-- (2,-8);
  \draw [line width=1pt] (4,8)-- (4,-8);
  \draw [line width=1pt] (6,8)-- (6,-8);
  \draw [line width=1pt] (8,8)-- (8,-8);
  \begin{scriptsize}
    \draw [color=black,fill=white] (0,0) circle (3pt);
    \draw [color=black,fill=white] (2,0) circle (3pt);
    \draw [color=black,fill=white] (4,0) circle (3pt);
    \draw [color=black,fill=white] (6,0) circle (3pt);
    \draw [color=black,fill=white] (8,0) circle (3pt);
    \draw [color=black,fill=white] (-2,0) circle (3pt);
    \draw [color=black,fill=white] (-4,0) circle (3pt);
    \draw [color=black,fill=white] (-6,0) circle (3pt);
    \draw [color=black,fill=white] (-8,0) circle (3pt);
    \draw [fill=black] (-7,0) circle (2pt);
    \draw [fill=black] (-5,0) circle (2pt);
    \draw [fill=black] (-3,0) circle (2pt);
    \draw [fill=black] (-1,0) circle (2pt);
    \draw [fill=black] (1,0) circle (2pt);
    \draw [fill=black] (3,0) circle (2pt);
    \draw [fill=black] (5,0) circle (2pt);
    \draw [fill=black] (-8,1) circle (2pt);
    \draw [fill=black] (-6,1) circle (2pt);
    \draw [fill=black] (-4,1) circle (2pt);
    \draw [fill=black] (-2,1) circle (2pt);
    \draw [fill=black] (0,1) circle (2pt);
    \draw [fill=black] (2,1) circle (2pt);
    \draw [fill=black] (4,1) circle (2pt);
    \draw [fill=black] (6,1) circle (2pt);
    \draw [fill=black] (7,0) circle (2pt);
    \draw [fill=black] (8,1) circle (2pt);
    \draw [color=black,fill=white] (-8,2) circle (3pt);
    \draw [color=black,fill=white] (8,2) circle (3pt);
    \draw [color=black,fill=white] (0,2) circle (3pt);
    \draw [color=black,fill=white] (2,2) circle (3pt);
    \draw [color=black,fill=white] (4,2) circle (3pt);
    \draw [color=black,fill=white] (6,2) circle (3pt);
    \draw [color=black,fill=white] (-2,2) circle (3pt);
    \draw [color=black,fill=white] (-4,2) circle (3pt);
    \draw [color=black,fill=white] (-6,2) circle (3pt);
    \draw [fill=black] (-7,2) circle (2pt);
    \draw [fill=black] (-5,2) circle (2pt);
    \draw [fill=black] (-3,2) circle (2pt);
    \draw [fill=black] (-1,2) circle (2pt);
    \draw [fill=black] (1,2) circle (2pt);
    \draw [fill=black] (3,2) circle (2pt);
    \draw [fill=black] (5,2) circle (2pt);
    \draw [fill=black] (-8,3) circle (2pt);
    \draw [fill=black] (-6,3) circle (2pt);
    \draw [fill=black] (-4,3) circle (2pt);
    \draw [fill=black] (-2,3) circle (2pt);
    \draw [fill=black] (0,3) circle (2pt);
    \draw [fill=black] (2,3) circle (2pt);
    \draw [fill=black] (4,3) circle (2pt);
    \draw [fill=black] (6,3) circle (2pt);
    \draw [fill=black] (7,2) circle (2pt);
    \draw [fill=black] (8,3) circle (2pt);
    \draw [color=black,fill=white] (-8,4) circle (3pt);
    \draw [color=black,fill=white] (8,4) circle (3pt);
    \draw [color=black,fill=white] (-8,6) circle (3pt);
    \draw [color=black,fill=white] (8,6) circle (3pt);
    \draw [color=black,fill=white] (0,4) circle (3pt);
    \draw [color=black,fill=white] (2,4) circle (3pt);
    \draw [color=black,fill=white] (4,4) circle (3pt);
    \draw [color=black,fill=white] (6,4) circle (3pt);
    \draw [color=black,fill=white] (-2,4) circle (3pt);
    \draw [color=black,fill=white] (-4,4) circle (3pt);
    \draw [color=black,fill=white] (-6,4) circle (3pt);
    \draw [fill=black] (-7,4) circle (2pt);
    \draw [fill=black] (-5,4) circle (2pt);
    \draw [fill=black] (-3,4) circle (2pt);
    \draw [fill=black] (-1,4) circle (2pt);
    \draw [fill=black] (1,4) circle (2pt);
    \draw [fill=black] (3,4) circle (2pt);
    \draw [fill=black] (5,4) circle (2pt);
    \draw [fill=black] (-8,5) circle (2pt);
    \draw [fill=black] (-6,5) circle (2pt);
    \draw [fill=black] (-4,5) circle (2pt);
    \draw [fill=black] (-2,5) circle (2pt);
    \draw [fill=black] (0,5) circle (2pt);
    \draw [fill=black] (2,5) circle (2pt);
    \draw [fill=black] (4,5) circle (2pt);
    \draw [fill=black] (6,5) circle (2pt);
    \draw [fill=black] (7,4) circle (2pt);
    \draw [fill=black] (8,5) circle (2pt);
    \draw [color=black,fill=white] (0,6) circle (3pt);
    \draw [color=black,fill=white] (2,6) circle (3pt);
    \draw [color=black,fill=white] (4,6) circle (3pt);
    \draw [color=black,fill=white] (6,6) circle (3pt);
    \draw [color=black,fill=white] (-2,6) circle (3pt);
    \draw [color=black,fill=white] (-4,6) circle (3pt);
    \draw [color=black,fill=white] (-6,6) circle (3pt);
    \draw [fill=black] (-7,6) circle (2pt);
    \draw [fill=black] (-5,6) circle (2pt);
    \draw [fill=black] (-3,6) circle (2pt);
    \draw [fill=black] (-1,6) circle (2pt);
    \draw [fill=black] (1,6) circle (2pt);
    \draw [fill=black] (3,6) circle (2pt);
    \draw [fill=black] (5,6) circle (2pt);
    \draw [fill=black] (-8,7) circle (2pt);
    \draw [fill=black] (-6,7) circle (2pt);
    \draw [fill=black] (-4,7) circle (2pt);
    \draw [fill=black] (-2,7) circle (2pt);
    \draw [fill=black] (0,7) circle (2pt);
    \draw [fill=black] (2,7) circle (2pt);
    \draw [fill=black] (4,7) circle (2pt);
    \draw [fill=black] (6,7) circle (2pt);
    \draw [fill=black] (7,6) circle (2pt);
    \draw [fill=black] (8,7) circle (2pt);
    \draw [color=black,fill=white] (-8,8) circle (3pt);
    \draw [color=black,fill=white] (8,8) circle (3pt);
    \draw [color=black,fill=white] (0,8) circle (3pt);
    \draw [color=black,fill=white] (2,8) circle (3pt);
    \draw [color=black,fill=white] (4,8) circle (3pt);
    \draw [color=black,fill=white] (6,8) circle (3pt);
    \draw [color=black,fill=white] (-2,8) circle (3pt);
    \draw [color=black,fill=white] (-4,8) circle (3pt);
    \draw [color=black,fill=white] (-6,8) circle (3pt);
    \draw [fill=black] (-7,8) circle (2pt);
    \draw [fill=black] (-5,8) circle (2pt);
    \draw [fill=black] (-3,8) circle (2pt);
    \draw [fill=black] (-1,8) circle (2pt);
    \draw [fill=black] (1,8) circle (2pt);
    \draw [fill=black] (3,8) circle (2pt);
    \draw [fill=black] (5,8) circle (2pt);
    \draw [fill=black] (7,8) circle (2pt);
    \draw [color=black,fill=white] (-8,-8) circle (3pt);
    \draw [color=black,fill=white] (8,-8) circle (3pt);
    \draw [color=black,fill=white] (-8,-6) circle (3pt);
    \draw [color=black,fill=white] (8,-6) circle (3pt);
    \draw [color=black,fill=white] (-8,-4) circle (3pt);
    \draw [color=black,fill=white] (8,-4) circle (3pt);
    \draw [color=black,fill=white] (-8,-2) circle (3pt);
    \draw [color=black,fill=white] (8,-2) circle (3pt);
    \draw [color=black,fill=white] (0,-8) circle (3pt);
    \draw [color=black,fill=white] (2,-8) circle (3pt);
    \draw [color=black,fill=white] (4,-8) circle (3pt);
    \draw [color=black,fill=white] (6,-8) circle (3pt);
    \draw [color=black,fill=white] (-2,-8) circle (3pt);
    \draw [color=black,fill=white] (-4,-8) circle (3pt);
    \draw [color=black,fill=white] (-6,-8) circle (3pt);
    \draw [fill=black] (-7,-8) circle (2pt);
    \draw [fill=black] (-5,-8) circle (2pt);
    \draw [fill=black] (-3,-8) circle (2pt);
    \draw [fill=black] (-1,-8) circle (2pt);
    \draw [fill=black] (1,-8) circle (2pt);
    \draw [fill=black] (3,-8) circle (2pt);
    \draw [fill=black] (5,-8) circle (2pt);
    \draw [fill=black] (-8,-7) circle (2pt);
    \draw [fill=black] (-6,-7) circle (2pt);
    \draw [fill=black] (-4,-7) circle (2pt);
    \draw [fill=black] (-2,-7) circle (2pt);
    \draw [fill=black] (0,-7) circle (2pt);
    \draw [fill=black] (2,-7) circle (2pt);
    \draw [fill=black] (4,-7) circle (2pt);
    \draw [fill=black] (6,-7) circle (2pt);
    \draw [fill=black] (7,-8) circle (2pt);
    \draw [fill=black] (8,-7) circle (2pt);
    \draw [color=black,fill=white] (0,-6) circle (3pt);
    \draw [color=black,fill=white] (2,-6) circle (3pt);
    \draw [color=black,fill=white] (4,-6) circle (3pt);
    \draw [color=black,fill=white] (6,-6) circle (3pt);
    \draw [color=black,fill=white] (-2,-6) circle (3pt);
    \draw [color=black,fill=white] (-4,-6) circle (3pt);
    \draw [color=black,fill=white] (-6,-6) circle (3pt);
    \draw [fill=black] (-7,-6) circle (2pt);
    \draw [fill=black] (-5,-6) circle (2pt);
    \draw [fill=black] (-3,-6) circle (2pt);
    \draw [fill=black] (-1,-6) circle (2pt);
    \draw [fill=black] (1,-6) circle (2pt);
    \draw [fill=black] (3,-6) circle (2pt);
    \draw [fill=black] (5,-6) circle (2pt);
    \draw [fill=black] (-8,-5) circle (2pt);
    \draw [fill=black] (-6,-5) circle (2pt);
    \draw [fill=black] (-4,-5) circle (2pt);
    \draw [fill=black] (-2,-5) circle (2pt);
    \draw [fill=black] (0,-5) circle (2pt);
    \draw [fill=black] (2,-5) circle (2pt);
    \draw [fill=black] (4,-5) circle (2pt);
    \draw [fill=black] (6,-5) circle (2pt);
    \draw [fill=black] (7,-6) circle (2pt);
    \draw [fill=black] (8,-5) circle (2pt);
    \draw [color=black,fill=white] (0,-4) circle (3pt);
    \draw [color=black,fill=white] (2,-4) circle (3pt);
    \draw [color=black,fill=white] (4,-4) circle (3pt);
    \draw [color=black,fill=white] (6,-4) circle (3pt);
    \draw [color=black,fill=white] (-2,-4) circle (3pt);
    \draw [color=black,fill=white] (-4,-4) circle (3pt);
    \draw [color=black,fill=white] (-6,-4) circle (3pt);
    \draw [fill=black] (-7,-4) circle (2pt);
    \draw [fill=black] (-5,-4) circle (2pt);
    \draw [fill=black] (-3,-4) circle (2pt);
    \draw [fill=black] (-1,-4) circle (2pt);
    \draw [fill=black] (1,-4) circle (2pt);
    \draw [fill=black] (3,-4) circle (2pt);
    \draw [fill=black] (5,-4) circle (2pt);
    \draw [fill=black] (-8,-3) circle (2pt);
    \draw [fill=black] (-6,-3) circle (2pt);
    \draw [fill=black] (-4,-3) circle (2pt);
    \draw [fill=black] (-2,-3) circle (2pt);
    \draw [fill=black] (0,-3) circle (2pt);
    \draw [fill=black] (2,-3) circle (2pt);
    \draw [fill=black] (4,-3) circle (2pt);
    \draw [fill=black] (6,-3) circle (2pt);
    \draw [fill=black] (7,-4) circle (2pt);
    \draw [fill=black] (8,-3) circle (2pt);
    \draw [color=black,fill=white] (0,-2) circle (3pt);
    \draw [color=black,fill=white] (2,-2) circle (3pt);
    \draw [color=black,fill=white] (4,-2) circle (3pt);
    \draw [color=black,fill=white] (6,-2) circle (3pt);
    \draw [color=black,fill=white] (-2,-2) circle (3pt);
    \draw [color=black,fill=white] (-4,-2) circle (3pt);
    \draw [color=black,fill=white] (-6,-2) circle (3pt);
    \draw [fill=black] (-7,-2) circle (2pt);
    \draw [fill=black] (-5,-2) circle (2pt);
    \draw [fill=black] (-3,-2) circle (2pt);
    \draw [fill=black] (-1,-2) circle (2pt);
    \draw [fill=black] (1,-2) circle (2pt);
    \draw [fill=black] (3,-2) circle (2pt);
    \draw [fill=black] (5,-2) circle (2pt);
    \draw [fill=black] (-8,-1) circle (2pt);
    \draw [fill=black] (-6,-1) circle (2pt);
    \draw [fill=black] (-4,-1) circle (2pt);
    \draw [fill=black] (-2,-1) circle (2pt);
    \draw [fill=black] (0,-1) circle (2pt);
    \draw [fill=black] (2,-1) circle (2pt);
    \draw [fill=black] (4,-1) circle (2pt);
    \draw [fill=black] (6,-1) circle (2pt);
    \draw [fill=black] (7,-2) circle (2pt);
    \draw [fill=black] (8,-1) circle (2pt);
  \end{scriptsize}
\end{tikzpicture}

%% file: figs/edge-pivotal-so-vertex-too.tex
\centering
\begin{tikzpicture}[line cap=round,line join=round,>=triangle 45,x=1cm,y=1cm,font=\normalsize]
  \clip(-4.2,-0.2) rectangle (2.2,6.2);
  \draw [line width=1pt,dash pattern=on 2pt off 2pt] (-4,6)-- (2,6);
  \draw [line width=1pt,dash pattern=on 2pt off 2pt] (2,6)-- (2,0);
  \draw [line width=1pt,dash pattern=on 2pt off 2pt] (2,0)-- (-4,0);
  \draw [line width=1pt,dash pattern=on 2pt off 2pt] (-4,0)-- (-4,6);
  \draw [line width=1pt] (-1,3)-- (-1,4);
  \draw [line width=1pt] (-1,4)-- (0,4);
  \draw [line width=1pt] (0,4)-- (0,5);
  \draw [line width=1pt] (0,5)-- (1,5);
  \draw [line width=1pt] (1,5)-- (1,4);
  \draw [line width=1pt] (0,3)-- (0,2);
  \draw [line width=1pt] (0,2)-- (-1,2);
  \draw [line width=1pt] (-1,2)-- (-1,1);
  \draw [line width=1pt] (-1,1)-- (-2,1);
  \draw [line width=1pt] (-1,2)-- (-2,2);
  \draw [line width=1pt] (-2,2)-- (-2,1);
  \draw [line width=1pt] (-2,1)-- (-2,0);
  \draw [line width=1pt] (0,3)-- (1,3);
  \draw [line width=1pt] (1,4)-- (1,3);
  \draw [line width=1pt] (-1,3)-- (0,3);
  \draw [line width=1pt] (1,2)-- (1,1);
  \draw [line width=1pt] (1,1)-- (0,1);
  \draw [line width=1pt] (0,1)-- (0,2);
  \draw [line width=1pt] (0,1)-- (-1,1);
  \begin{scriptsize}
    \draw [fill=ffffff] (-4,0) circle (2.5pt);
    \draw [fill=ffffff] (-3,0) circle (2.5pt);
    \draw [fill=ffffff] (-2,0) circle (2.5pt);
    \draw [fill=ffffff] (-1,0) circle (2.5pt);
    \draw [fill=ffffff] (0,0) circle (2.5pt);
    \draw [fill=ffffff] (1,0) circle (2.5pt);
    \draw [fill=ffffff] (-4,1) circle (2.5pt);
    \draw [fill=ffffff] (-3,1) circle (2.5pt);
    \draw [fill=ffffff] (-2,1) circle (2.5pt);
    \draw [fill=ffffff] (-1,1) circle (2.5pt);
    \draw [fill=ffffff] (0,1) circle (2.5pt);
    \draw [fill=ffffff] (1,1) circle (2.5pt);
    \draw [fill=ffffff] (1,2) circle (2.5pt);
    \draw [fill=ffffff] (0,2) circle (2.5pt);
    \draw [fill=ffffff] (-1,2) circle (2.5pt);
    \draw [fill=ffffff] (-2,2) circle (2.5pt);
    \draw [fill=ffffff] (-3,2) circle (2.5pt);
    \draw [fill=ffffff] (-4,2) circle (2.5pt);
    \draw [fill=ffffff] (-4,3) circle (2.5pt);
    \draw [fill=ffffff] (-4,4) circle (2.5pt);
    \draw [fill=ffffff] (-4,5) circle (2.5pt);
    \draw [fill=ffffff] (-3,5) circle (2.5pt);
    \draw [fill=ffffff] (-3,4) circle (2.5pt);
    \draw [fill=ffffff] (-3,3) circle (2.5pt);
    \draw [fill=ffffff] (-2,5) circle (2.5pt);
    \draw [fill=ffffff] (-2,4) circle (2.5pt);
    \draw [fill=ffffff] (-2,3) circle (2.5pt);
    \draw [fill=ffffff] (-1,5) circle (2.5pt);
    \draw [fill=ffffff] (-1,4) circle (2.5pt);
    \draw [fill=black] (-1,3) circle (2.5pt);
    \draw[color=black] (-1.2,2.8) node {$0$};
    \draw [fill=ffffff] (0,5) circle (2.5pt);
    \draw [fill=ffffff] (0,4) circle (2.5pt);
    \draw [fill=ffffff] (0,3) circle (2.5pt);
    \draw [fill=ffffff] (1,5) circle (2.5pt);
    \draw [fill=ffffff] (1,4) circle (2.5pt);
    \draw [fill=ffffff] (1,3) circle (2.5pt);
    \draw [fill=ffffff] (-4,6) circle (2.5pt);
    \draw [fill=ffffff] (-3,6) circle (2.5pt);
    \draw [fill=ffffff] (-2,6) circle (2.5pt);
    \draw [fill=ffffff] (-1,6) circle (2.5pt);
    \draw [fill=ffffff] (0,6) circle (2.5pt);
    \draw [fill=ffffff] (1,6) circle (2.5pt);
    \draw [fill=ffffff] (2,6) circle (2.5pt);
    \draw [fill=ffffff] (2,5) circle (2.5pt);
    \draw [fill=ffffff] (2,4) circle (2.5pt);
    \draw [fill=ffffff] (2,3) circle (2.5pt);
    \draw [fill=ffffff] (2,2) circle (2.5pt);
    \draw [fill=ffffff] (2,1) circle (2.5pt);
    \draw [fill=ffffff] (2,0) circle (2.5pt);
    \draw[color=black] (0.2,2.5) node {$e_1$};
    \draw[color=black] (-1.8,0.5) node {$e_2$};
    \draw[color=black] (-0.5,3.2) node {$e_3$};
    \draw[color=black] (-3.6, 5.5) node {$\partial L_6$};
  \end{scriptsize}
\end{tikzpicture}

%% file: figs/vertex-pivotal-so-edges-too.tex
\centering
\begin{tikzpicture}[line cap=round,line join=round,>=triangle 45,x=1cm,y=1cm,font=\normalsize]
  \clip(-1.2,-0.2) rectangle (7.2,6.2);
  \draw [line width=1pt,dash pattern=on 2pt off 2pt] (2,6)-- (2,0);
  \draw [line width=1pt] (-1,3)-- (-1,4);
  \draw [line width=1pt] (-1,4)-- (0,4);
  \draw [line width=1pt] (0,4)-- (0,5);
  \draw [line width=1pt] (0,5)-- (1,5);
  \draw [line width=1pt] (1,5)-- (1,4);
  \draw [line width=1pt] (0,3)-- (0,2);
  \draw [line width=1pt] (-1,2)-- (-1,1);
  \draw [line width=1pt] (0,3)-- (1,3);
  \draw [line width=1pt] (1,4)-- (1,3);
  \draw [line width=1pt] (-1,3)-- (0,3);
  \draw [line width=1pt] (1,1)-- (0,1);
  \draw [line width=1pt] (0,1)-- (0,2);
  \draw [line width=1pt] (0,1)-- (-1,1);
  \draw [line width=1pt,dash pattern=on 2pt off 2pt] (7,6)-- (7,0);
  \draw [line width=1pt] (4,3)-- (4,4);
  \draw [line width=1pt] (4,4)-- (5,4);
  \draw [line width=1pt] (5,4)-- (5,5);
  \draw [line width=1pt] (5,5)-- (6,5);
  \draw [line width=1pt] (6,5)-- (6,4);
  \draw [line width=1pt] (5,2)-- (4,2);
  \draw [line width=1pt] (4,2)-- (4,1);
  \draw [line width=1pt] (5,3)-- (6,3);
  \draw [line width=1pt] (6,4)-- (6,3);
  \draw [line width=1pt] (4,3)-- (5,3);
  \draw [line width=1pt] (6,1)-- (5,1);
  \draw [line width=1pt] (5,1)-- (4,1);
  \draw [line width=1pt] (-1,2)-- (1,2);
  \draw [line width=1pt] (5,3)-- (5,2);
  \draw [line width=1pt] (1,2)-- (1,3);
  \draw [line width=1pt] (6,3)-- (6,2);
  \draw [line width=1pt] (1,1)-- (2,1);
  \draw [line width=1pt] (6,1)-- (7,1);
  \draw [line width=1pt,dash pattern=on 2pt off 2pt] (-1,6)-- (2,6);
  \draw [line width=1pt,dash pattern=on 2pt off 2pt] (-1,0)-- (2,0);
  \draw [line width=1pt,dash pattern=on 2pt off 2pt] (4,6)-- (7,6);
  \draw [line width=1pt,dash pattern=on 2pt off 2pt] (4,0)-- (7,0);
  \begin{scriptsize}
    \draw [fill=ffffff] (-1,0) circle (2.5pt);
    \draw [fill=ffffff] (0,0) circle (2.5pt);
    \draw [fill=ffffff] (1,0) circle (2.5pt);
    \draw [fill=ffffff] (-1,1) circle (2.5pt);
    \draw [fill=ffffff] (0,1) circle (2.5pt);
    \draw [fill=ffffff] (1,1) circle (2.5pt);
    \draw [fill=ffffff] (1,2) circle (2.5pt);
    \draw [fill=ffffff] (0,2) circle (2.5pt);
    \draw [fill=ffffff] (-1,2) circle (2.5pt);
    \draw [fill=ffffff] (-1,5) circle (2.5pt);
    \draw [fill=ffffff] (-1,4) circle (2.5pt);
    \draw [fill=black] (-1,3) circle (2.5pt);
    \draw[color=black] (-0.8,3.2) node {$0$};
    \draw[color=black] (0.2,1.8) node {$x$};
    \draw[color=black] (-0.5, 5.8) node {$\partial L_6$};
    \draw [fill=ffffff] (0,5) circle (2.5pt);
    \draw [fill=ffffff] (0,4) circle (2.5pt);
    \draw [fill=ffffff] (0,3) circle (2.5pt);
    \draw [fill=ffffff] (1,5) circle (2.5pt);
    \draw [fill=ffffff] (1,4) circle (2.5pt);
    \draw [fill=ffffff] (1,3) circle (2.5pt);
    \draw [fill=ffffff] (-1,6) circle (2.5pt);
    \draw [fill=ffffff] (0,6) circle (2.5pt);
    \draw [fill=ffffff] (1,6) circle (2.5pt);
    \draw [fill=ffffff] (2,6) circle (2.5pt);
    \draw [fill=ffffff] (2,5) circle (2.5pt);
    \draw [fill=ffffff] (2,4) circle (2.5pt);
    \draw [fill=ffffff] (2,3) circle (2.5pt);
    \draw [fill=ffffff] (2,2) circle (2.5pt);
    \draw [fill=ffffff] (2,1) circle (2.5pt);
    \draw [fill=ffffff] (2,0) circle (2.5pt);
    \draw [fill=ffffff] (7,6) circle (2.5pt);
    \draw [fill=ffffff] (7,6) circle (2.5pt);
    \draw [fill=ffffff] (7,0) circle (2.5pt);
    \draw [fill=ffffff] (7,0) circle (2.5pt);
    \draw [fill=ffffff] (4,4) circle (2.5pt);
    \draw [fill=ffffff] (4,4) circle (2.5pt);
    \draw [fill=ffffff] (5,4) circle (2.5pt);
    \draw [fill=ffffff] (5,4) circle (2.5pt);
    \draw [fill=ffffff] (5,5) circle (2.5pt);
    \draw [fill=ffffff] (5,5) circle (2.5pt);
    \draw [fill=ffffff] (6,5) circle (2.5pt);
    \draw [fill=ffffff] (6,5) circle (2.5pt);
    \draw [fill=ffffff] (6,4) circle (2.5pt);
    \draw [fill=ffffff] (5,3) circle (2.5pt);
    \draw [fill=ffffff] (5,2) circle (2.5pt);
    \draw [fill=ffffff] (5,2) circle (2.5pt);
    \draw [fill=ffffff] (4,2) circle (2.5pt);
    \draw [fill=ffffff] (4,2) circle (2.5pt);
    \draw [fill=ffffff] (4,1) circle (2.5pt);
    \draw [fill=ffffff] (4,1) circle (2.5pt);
    \draw [fill=ffffff] (4,2) circle (2.5pt);
    \draw [fill=ffffff] (5,3) circle (2.5pt);
    \draw [fill=ffffff] (6,3) circle (2.5pt);
    \draw [fill=ffffff] (6,4) circle (2.5pt);
    \draw [fill=ffffff] (6,3) circle (2.5pt);
    \draw [fill=black] (4,3) circle (2.5pt);
    \draw[color=black] (4.2,3.2) node {$0$};
    \draw[color=black] (5.2,1.8) node {$x$};
    \draw[color=black] (5.2,2.5) node {$e_1$};
    \draw[color=black] (4.5,1.8) node {$e_2$};
    \draw[color=black] (4.5, 5.8) node {$\partial L_6$};
    \draw [fill=ffffff] (5,3) circle (2.5pt);
    \draw [fill=ffffff] (5,2) circle (2.5pt);
    \draw [fill=ffffff] (6,2) circle (2.5pt);
    \draw [fill=ffffff] (6,2) circle (2.5pt);
    \draw [fill=ffffff] (6,1) circle (2.5pt);
    \draw [fill=ffffff] (6,1) circle (2.5pt);
    \draw [fill=ffffff] (5,1) circle (2.5pt);
    \draw [fill=ffffff] (5,1) circle (2.5pt);
    \draw [fill=ffffff] (5,2) circle (2.5pt);
    \draw [fill=ffffff] (5,1) circle (2.5pt);
    \draw [fill=ffffff] (4,1) circle (2.5pt);
    \draw [fill=ffffff] (4,0) circle (2.5pt);
    \draw [fill=ffffff] (5,0) circle (2.5pt);
    \draw [fill=ffffff] (6,0) circle (2.5pt);
    \draw [fill=ffffff] (4,5) circle (2.5pt);
    \draw [fill=ffffff] (4,6) circle (2.5pt);
    \draw [fill=ffffff] (5,6) circle (2.5pt);
    \draw [fill=ffffff] (6,6) circle (2.5pt);
    \draw [fill=ffffff] (7,5) circle (2.5pt);
    \draw [fill=ffffff] (7,4) circle (2.5pt);
    \draw [fill=ffffff] (7,3) circle (2.5pt);
    \draw [fill=ffffff] (7,2) circle (2.5pt);
    \draw [fill=ffffff] (7,1) circle (2.5pt);
  \end{scriptsize}
\end{tikzpicture}

%% file: figs/lipschitz.tex
\definecolor{zzttqq}{rgb}{0.6,0.2,0}
\begin{tikzpicture}[line cap=round,line join=round,>=triangle 45,x=30cm,y=30cm,font=\normalsize]
  \clip(0.10,0.462144) rectangle (0.30,0.82);
  \fill[line width=2pt,color=zzttqq,fill=zzttqq,fill opacity=0.10000000149011612] (0.149,0.79) -- (0.251,0.79) -- (0.251,0.4721440000000002) -- (0.149,0.4721440000000001) -- cycle;
  \draw[line width=2pt,smooth,samples=100,domain=0.1:0.3] plot(\x,{0.5*((\x)-1)^(6)+0.5}) node [right, label={[label distance=0.7cm]93:$q = q_c(p)$}] {};
  \draw [line width=1pt] (0.2,0)-- (0.2,0.793);
  \draw [line width=1pt] (0.2,0.807)-- (0.2,1);
  \draw [line width=1pt] (0.2,0.6667720000000001)-- (0.149,0.7747) node[pos=0.5, sloped, shape=isosceles triangle, fill=black, minimum width=5pt, minimum height=10pt, inner sep=0pt, shape border rotate=180] {};
  \draw [line width=1pt] (0.251,0.4874440000000001)-- (0.2,0.595372) node[pos=0.5, sloped, shape=isosceles triangle, fill=black, minimum width=5pt, minimum height=10pt, inner sep=0pt, shape border rotate=180] {};
  \draw [line width=1pt,dash pattern=on 2pt off 2pt] (0.149,0.739)-- (0.251,0.5231440000000002);
  \draw [line width=1pt,Bar-Bar] (0.14,0.739) -- (0.14,0.7747) node [pos=0.5, label={[label distance=-4pt]right:$h$}] {};
  \draw [line width=1pt,Bar-Bar] (0.13,0.739) -- (0.13,0.79) node [pos=0.5, label={[label distance=-4pt]left:$\delta$}] {};
  \draw [line width=1pt,Bar-Bar] (0.15,0.8) -- (0.2,0.8) node [pos=0.5, label={[label distance=-4pt]above:$\delta$}] {};
  \node at (0.175,0.5) {$V$};
  \begin{scriptsize}
    \draw [fill=black] (0.2,0.6310720000000001) circle (3pt) node [label={above right:$(p_0, q_c(p_0))$}] {};
    \draw [fill=black] (0.2,0.6667720000000001) circle (3pt) node [label={above right:supercritical}] {};
    \draw [fill=black] (0.2,0.595372) circle (3pt) node [label={below left:subcritical}] {};
  \end{scriptsize}
\end{tikzpicture}

%% file: sections/finite-weights.tex
\section{The case of finitely many weights}
\label{sec:fw}

In this section, we prove that if there is a phase transition for the model below, with parameter $\beta \in \R_+^*$, then this phase transition is sharp.

\begin{definition}
  Let $M, N \in \N^*$. Let $h : (m, \beta) \in \llbracket 0, M \rrbracket \times \R_+^* \mapsto h_m(\beta) \in [0, 1[$ and $\zeta : \llbracket 0, N \rrbracket^2 \to \llbracket 0, M \rrbracket$. We assume that $\zeta$ and $h$ have the following properties.
  \begin{itemize}
    \item $\forall (n_1, n_2) \in \llbracket 0, N \rrbracket^2, \zeta(n_1, n_2) = \zeta(n_2, n_1)$
    \item $\zeta$ is nondecreasing with respect to the product order.
    \item $0 = h_0 < h_1 < \ldots < h_M$ and for all $m \in \llbracket 1, M \rrbracket$, $h_m$ is increasing and of class $C^1$.
    \item For all $m \in \llbracket 1, M \rrbracket, h_m' > \frac {1 - h_m} {1 - h_{m-1}} h_{m-1}'$.\footnote{The meaning of this inequality will made be clear in Remark~\ref{rem:Bernoulli-compat}.}
  \end{itemize}
  Let $q_\nu$ be a probability distribution on $\llbracket 0, N \rrbracket$ such that $q_\nu(N) > 0$. Let
  \begin{equation}
    (\Omega, \mathcal F, \Proba) = (\llbracket 0, N \rrbracket, \mathcal P(\llbracket 0, N \rrbracket), q_\nu)^{\otimes \Z^d} \otimes ([0, 1], \mathcal B([0, 1]), \mu)^{\otimes E},
  \end{equation}
  where $\mu$ is the Lebesgue measure on $[0, 1]$. For a configuration $(\nu, U) \in \Omega$ the edge $e \in E$ is open if $$h_{\zeta(\nu_x, \nu_y)}(\beta) \ge U_e.$$
  \label{def:fw-natural}
\end{definition}

\begin{remark}
  This model contains at most the edges of standard Bernoulli bond percolation with edge probability $h_M(\beta)$, therefore if $h_M(\beta) \underset{\beta \to 0} \longrightarrow 0$ then there is a subcritical phase.\smallskip

  If there is a number $n_0 \in \llbracket 0, N \rrbracket$ such that $\zeta(n_0, n_0) \ge 1$ then this model contains at least the edges of site and bond percolation with parameter $q_\nu(\llbracket n_0, N \rrbracket)$ for the sites and $h_1(\beta)$ for the bonds. Furthermore, if \smash{$h_1(\beta) \underset{\beta \to +\infty} \longrightarrow 1$} and $q_\nu(\llbracket n_0, N \rrbracket) > q_{0, c}$ then there is a supercritical phase (see Section~\ref{sec:site-bond} and Theorem~\ref{thm:qc}).\smallskip

  These criteria do not characterize the existence of a phase transition and can be improved. However, they justify that there are many examples of this model with a phase transition.\smallskip
  
  A typical example would be
  $h(m,\beta)=1-e^{-\beta m}$ and 
  $\zeta(n_1,n_2)=n_1 n_2$
  with $q_\nu$ such that $q_\nu(0) < 1 - q_{0, c}$.
\end{remark}

\subsection{Redefining using Bernoulli variables}

In order to use our usual set of tools (especially Theorem~\ref{thm:OSSS}), we will describe this model using a family of independent Bernoulli random variables.

\begin{definition}
  Let $\Omega = \{0, 1\}^{E \times \llbracket 1, M \rrbracket} \times \{0, 1\}^{\Z^d \times \llbracket 1, N \rrbracket}$. We define, for $e \in E$ and $l \in \llbracket 1, M \rrbracket$,
  \begin{equation}
    Y_{e, l}: (y, z) \in \Omega \mapsto y(e, l) \in \{0, 1\}
  \end{equation}
  and, for $x \in \Z^d$ and $k \in \llbracket 1, N \rrbracket$,
  \begin{equation}
    Z_{x, k}: (y, z) \in \Omega \mapsto z(x, k) \in \{0, 1\}.
  \end{equation}
  We call these Bernoulli variables base variables. We define for $l \in \llbracket 1, M \rrbracket$ the function
  \begin{equation}
    p_l: \beta \in \R_+^* \mapsto 1 - \frac {1 - h_l(\beta)} {1 - h_{l-1}(\beta)} \in [0, 1].
  \end{equation}
  Let $\Proba$ be the product probability measure on $\Omega$ such that
  \begin{equation}
    \Proba(Y_{e, l} = 1) = p_l(\beta) \text{ for all $e \in E$,$l \in \llbracket 1, M \rrbracket$} 
  \end{equation}
  and
  \begin{equation}
    \Proba(Z_{x, k} = 1) = \frac {q_\nu(\llbracket k, N \rrbracket)} {q_\nu(\llbracket k-1, N \rrbracket)}
    \text{ for all $x \in \Z^d$, $k \in \llbracket 1, N \rrbracket$.}
  \end{equation}
  We define
  \begin{equation}
    \nu_x = \min \{n \in \llbracket 0, N \rrbracket : \forall 1 \le k \le n, Z_{x, k} = 1 \}
  \end{equation}
  and we say that an edge $e = \{x, y\}$ is open if
  \begin{equation}
    \max\limits_{1 \le l \le \zeta(\nu_x, \nu_y)} Y_{e, l} = 1.
  \end{equation}
  \label{def:Bernoulli-split}
\end{definition}

\begin{remark}
  Note that the last assumption on $h$ is equivalent to $p_l' > 0$. Moreover, relating this to the original model, this assumption is true for functions $h$ of the form
  \begin{equation}
    h_m(\beta) = 1 - \exp(- \beta c_m),
  \end{equation}
  where $(c_m)$ is an increasing sequence of real numbers with $c_0 = 0$. Indeed, in this case $p_l(\beta) = 1 - \exp(-\beta (c_l - c_{l-1}))$.
  \label{rem:Bernoulli-compat}
\end{remark}

\begin{prop}
  The distributions of weights and open edges in Definition~\ref{def:Bernoulli-split} and Definition~\ref{def:fw-natural} are identical.
\end{prop}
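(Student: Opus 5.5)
The plan is to compare the two joint laws of (weights, edge states) directly. In both models the weights are i.i.d. over sites, and given the weights the edge states are conditionally independent, each depending only on the weights of its endpoints. So it suffices to check two things. First, that $\nu_x$ defined from the $Z_{x,k}$ has law $q_\nu$. Second, that conditionally on $\nu$, the edge $e=\{x,y\}$ is open with probability $h_{\zeta(\nu_x,\nu_y)}(\beta)$, independently over edges. Independence is immediate: $\nu_x$ is a function of $(Z_{x,k})_k$ only, the edge state of $e$ is a function of $(Y_{e,l})_l$ and $(\nu_x,\nu_y)$, and all base variables are independent. Hence the family $(\nu_x)_x$ is i.i.d., and it is independent of the i.i.d. family $((Y_{e,l})_l)_e$. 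The same structure holds in Definition~\ref{def:fw-natural} with the $U_e$.

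For the weights, I read the definition of $\nu_x$ as the largest $n$ such that $Z_{x,1}=\dots=Z_{x,n}=1$, i.e. the index just before the first failure, capped at $N$. (The literal $\min$ would be $0$; that is clearly a typo for $\max$, and I would note it.) With this reading, for $n\in\llbracket 0,N\rrbracket$ the event $\{\nu_x\ge n\}$ equals $\{Z_{x,1}=\dots=Z_{x,n}=1\}$. Its probability is a telescoping product:
\begin{equation*}
  \prod_{k=1}^n \frac{q_\nu(\llbracket k,N\rrbracket)}{q_\nu(\llbracket k-1,N\rrbracket)} = q_\nu(\llbracket n,N\rrbracket),
\end{equation*}
since $q_\nu(\llbracket 0,N\rrbracket)=1$. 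All denominators are at least $q_\nu(N)>0$, so the ratios are well defined. Hence $\Proba(\nu_x\ge n)=q_\nu(\llbracket n,N\rrbracket)$ for all $n$, which identifies the law of $\nu_x$ as $q_\nu$.

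For the edges, fix $m=\zeta(\nu_x,\nu_y)$. The edge is closed if and only if $Y_{e,1}=\dots=Y_{e,m}=0$. This event is independent of $\nu$ and has probability
\begin{equation*}
  \prod_{l=1}^m \frac{1-h_l(\beta)}{1-h_{l-1}(\beta)} = 1-h_m(\beta),
\end{equation*}
again by telescoping, using $h_0=0$ and $h_l<1$ so that the denominators are nonzero. In the original model, given $\nu$ the edge is open iff $U_e\le h_m(\beta)$, which has probability $h_m(\beta)$. The case $m=0$ is consistent: an empty maximum means the edge is closed, and $h_0=0$. We also need $p_l\in[0,1]$; this follows from $h_{l-1}\le h_l<1$.

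Finally, both joint laws are product-form. Each is the law of the weights times the product over edges of Bernoulli kernels with parameter $h_{\zeta(\nu_x,\nu_y)}(\beta)$, so the two laws coincide, for instance by checking them on finite-dimensional cylinder events. The only real obstacle is interpretive: reading the definition of $\nu_x$ correctly, and the empty-maximum convention. The computations themselves are routine telescoping products.
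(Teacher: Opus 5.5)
Your proof is correct and follows essentially the same route as the paper. You get independence from the disjoint families of base variables, then telescoping products giving $\Proba(\nu_x \ge k) = q_\nu(\llbracket k, N\rrbracket)$ and $\Proba(e \text{ closed} \mid \nu) = 1 - h_{\zeta(\nu_x,\nu_y)}(\beta)$. Your reading of the $\min$ in the definition of $\nu_x$ as a $\max$ is also right, and it is exactly what the paper's step $\Proba(\nu_x\ge k)=\Proba(Z_{x,1}=\dots=Z_{x,k}=1)$ implicitly uses.
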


\begin{proof}
  We consider the model in Definition~\ref{def:Bernoulli-split} and prove that it is equivalent to the one defined in Definition~\ref{def:fw-natural}.\smallskip

  The weights $(\nu_x)$ depend on pairwise disjoints sets of base variables, and base variables are independent. Therefore the weights are independent. Moreover, for $x \in \Z^d$ and $k \in \llbracket 0, N \rrbracket$
  \begin{equation}
    \Proba(\nu_x \ge k)
    = \Proba(Z_{x, 1} = \ldots = Z_{x, k} = 1)
    = \prod\limits_{i=1}^k \frac {q_\nu(\llbracket i, N \rrbracket)} {q_\nu(\llbracket i-1, N \rrbracket)}
    = q_\nu(\llbracket k, N \rrbracket).
  \end{equation}
  Thus, the weights have the desired distribution. Now, we need to prove that the distribution of open edges given the weights is correct. Since the states of edges are independent given $\nu$ (as they should) we just need to compute the probability
  \begin{equation}
    \Proba(e \text{ closed} \mid \nu)
    = \Proba(Y_{e, 1} = \ldots = Y_{e, \zeta(\nu_x, \nu_y)} = 0)
    = \prod\limits_{l=1}^{\zeta(\nu_x, \nu_y)} \frac {1 - h_l(\beta)} {1 - h_{l-1}(\beta)}
    = 1 - h_{\zeta(\nu_x, \nu_y)}(\beta).
  \end{equation}
  Therefore, these two descriptions define the same model.
\end{proof}

\begin{remark}
  The event that an edge is open is increasing.
  More generally, any event or random variable that can be defined using only the states of edges and that is increasing in $\{0, 1\}^E$ is also increasing in the probability space from Definition~\ref{def:Bernoulli-split}.
\end{remark}

From now on, we consider the model as defined in Definition~\ref{def:Bernoulli-split} and denote $A_n = \{0 \lra \partial \Lambda_n\}$ and $\theta_n = \Proba(A_n)$.

\subsection{Applying the OSSS inequality}

Let us prove an alternative to Lemma~\ref{lem:3.2}.

\begin{lemma}
  Let $n \in \N^*$.
  \begin{equation}
    \frac n  {\sum\limits_{k=0}^{n-1} \theta_k} \theta_n (1 - \theta_n) \le 4 \sum\limits_{e \in E} \sum\limits_{l=1}^M \Cov(Y_{e, l}, \ind_{A_n}) + 4d \sum\limits_{x \in \Z^d} \sum\limits_{l=1}^N \Cov(Z_{x, l}, \ind_{A_n})
  \end{equation}
  \label{lem:4.6-fw}
\end{lemma}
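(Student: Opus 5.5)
We will apply Theorem~\ref{thm:OSSS} to the product measure $\Proba$ on the finite set of base variables that $A_n$ depends on. These are the $Y_{e,l}$ for $e$ inside $\Lambda_n$ and the $Z_{x,k}$ for $x$ an endpoint of such an edge, so the set is finite and contained in $\Lambda_n$. A product measure is monotonic, and $\ind_{A_n}$ is increasing in the base variables by the remark following the equivalence proposition. As in the proof of Lemma~\ref{lem:3.2}, for each $k \in \llbracket 1, n \rrbracket$ we use a decision tree $T_k$ that explores the clusters of $\partial\Lambda_k$ inside $\Lambda_n$. The exploration has to be adapted to base variables.

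The tree $T_k$ keeps a set of discovered vertices, initially $\partial\Lambda_k$. For a discovered vertex $x$ it reveals $Z_{x,1}, Z_{x,2},\dots$ until $\nu_x$ is determined. To examine an edge $e=\{x,y\}$ with $x$ discovered, it reveals the $Z_{y,\cdot}$ as needed to determine $\nu_y$. It then reveals $Y_{e,1},\dots,Y_{e,\zeta(\nu_x,\nu_y)}$ in order, stopping at the first $1$. If $e$ is open, $y$ becomes discovered. The tree stops once the clusters of $\partial\Lambda_k$ are explored, which determines $A_n$, since every path from $0$ to $\partial\Lambda_n$ crosses $\partial\Lambda_k$.

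We then bound the revealment probabilities.
\begin{itemize}
\item A variable $Y_{e,l}$ is revealed only if one endpoint $x$ of $e$ is connected to $\partial\Lambda_k$. This gives $\delta_{Y_{e,l}} \le \Proba(x\lra\partial\Lambda_k)+\Proba(y\lra\partial\Lambda_k)$. Note that "$x$ discovered" is determined without the variables of $e$, so no FKG correction factor is needed here, unlike in Lemma~\ref{lem:4.6-site}.
\item A variable $Z_{y,k'}$ is revealed only if $y$ is discovered or a neighbour of $y$ is discovered. Hence $\delta_{Z_{y,k'}} \le \sum_{x\sim y \text{ or } x=y}\Proba(x\lra \partial\Lambda_k)$. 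This is a sum of at most $2d+1$ terms, which is where the factor $d$ in the statement comes from.
\end{itemize}
Using translation invariance, $\Proba(x\lra\partial\Lambda_k)\le \theta_{|k-\|x\|_\infty|}$. As in Lemma~\ref{lem:3.2}, the index $|k-\|x\|_\infty|$ hits each value in $\llbracket 0,n-1\rrbracket$ at most twice as $k$ ranges over $\llbracket 1,n\rrbracket$. Neighbours $x$ of $y$ satisfy $\bigl|\|x\|_\infty-\|y\|_\infty\bigr|\le1$, so their contributions are handled the same way.

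Summing $\Var(\ind_{A_n})=\theta_n(1-\theta_n)$ over $k$ and applying OSSS then gives
\begin{equation*}
n\,\theta_n(1-\theta_n)\le \sum_{k=0}^{n-1}\theta_k\Bigl(C_Y\sum_{e,l}\Cov(Y_{e,l},\ind_{A_n})+C_Z\sum_{x,l}\Cov(Z_{x,l},\ind_{A_n})\Bigr).
\end{equation*}
Here $C_Y=4$: two endpoints, each index attained at most twice. The crude count gives $C_Z$ of order $2(2d+1)$.

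The main obstacle is getting exactly the constant $4d$ for the $Z$ terms, rather than the crude $2(2d+1)$. The fix is a more careful exploration. We would reveal the $Z_{y,\cdot}$ only when an edge from a discovered vertex toward $y$ is being examined, so that $\nu_x$ for a discovered $x$ was already revealed at the moment $x$ was discovered (or, for $x\in\partial\Lambda_k$, at initialisation). Then $Z_{y,\cdot}$ is revealed only if $y\in\partial\Lambda_k$ or some neighbour of $y$ is connected to $\partial\Lambda_k$. Only $2d$ neighbours contribute, each index is attained at most twice, and the term $y\in\partial\Lambda_k$ is absorbed because $\theta_0=1$. This yields $4d$. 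The covariances are nonnegative since $\ind_{A_n}$ is increasing, so any slack in these constants only strengthens the inequality.
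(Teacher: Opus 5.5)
Your proposal is correct and is essentially the paper's proof: OSSS is applied to the trees $T_k$ exploring from $\partial\Lambda_k$, the revealment of $Y_{e,l}$ is bounded by the two endpoint connection probabilities and that of $Z_{x,l}$ by the $2d$ neighbour connection probabilities, and each index $|k-\|x\|_\infty|$ is hit at most twice when summing over $k$. The only difference is the case $y\in\partial\Lambda_k$, which the paper removes by noting that for $d\ge 2$, $k\ge 1$ every vertex of $\partial\Lambda_k$ has a neighbour in $\partial\Lambda_k$. Your absorption also works, but the reason is that the index $0$ is attained at most once per neighbour, leaving a spare $\theta_0=1$; ``each index at most twice'' together with $\theta_0=1$ alone would only give $4d\sum_{j=0}^{n-1}\theta_j+1$, so you should state this explicitly.
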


\begin{proof}
  Note that many of these terms are equal to $0$, so we can work in the finite subgraph $\Lambda_{2n}$. This enables us to use Theorem~\ref{thm:OSSS}.
  For $k \in \llbracket 1, n \rrbracket$ we denote $T_k$ the decision tree associated to a graph walk starting with all vertices in $\partial \Lambda_k$. When this decision tree decides to uncover the state of an edge, it reveals the state of all base variables linked to this edge and its endpoints.\footnote{This is not optimal, but enough for the scope of this work.}\smallskip

  For a base random variable $X$ we denote $\delta_X(T_k) = \Proba(X \text{ revealed by } T_k)$.\footnote{For full consistency with the usual notations, we should denote $\delta_u(T_k) = \Proba(u \text{ revealed by } T_k)$ with $u \in (\Z^d \times \llbracket 1, N \rrbracket) \sqcup (E \times \llbracket 1, M \rrbracket)$ such that $X = \omega(u)$. However, we retain the labels $Y$ and $Z$ for the sake of clearly separating edge-related and vertex-related components.}
  As in \cite[Lemma 3.2]{drt17}, if an edge is revealed then one of its endpoints is connected to $\partial \Lambda_k$. Hence for an edge $e = \{x, y\} \subset \Lambda_n$ and $l \in \llbracket 1, M \rrbracket$,
  \begin{equation}
    \delta_{Y_{e, l}}(T_k) \le \Proba(x \lra \partial \Lambda_k) + \Proba(y \lra \partial \Lambda_k).
    \label{eq:yel-ersatz}
  \end{equation}
  Moreover, a vertex $x \in \Lambda_n$ is revealed if and only if one of its neighbours is connected to $\partial \Lambda_k$ or $x \in \partial \Lambda_k$ itself. Since $d \ge 2$ and $k \ge 1$, the second condition implies the first (any vertex in $\partial \Lambda_k$ has a neighbour in $\partial \Lambda_k$). For $l \in \llbracket 1, N \rrbracket$, we have
  \begin{equation}
    \delta_{Z_{x, l}}(T_k) \le \sum\limits_{y \in \nei(x) \cap \Lambda_n} \Proba(y \lra \partial \Lambda_k).
    \label{eq:zxl-ersatz}
  \end{equation}
  Now, given $x \in \Lambda_n$,
  \begin{equation}
    \sum\limits_{k=1}^n \Proba(x \lra \partial \Lambda_k) \le \sum\limits_{k=1}^n \Proba(x \lra \partial \Lambda_{|k - d(0, x)|}(x)).
  \end{equation}
  For $x \ne 0$, the integer $|k - d(0, x)|$ takes each value in $\llbracket 0, n-1 \rrbracket$ at most two times. For $x=0$ we notice that $\theta_k \le \theta_{k-1}$. From these two facts, we get that
  \begin{equation}
    \sum\limits_{k=1}^n \Proba(x \lra \partial \Lambda_k) \le 2 \sum\limits_{l=0}^{n-1} \Proba(x \lra \partial \Lambda_l(x)).
  \end{equation}
  Thanks to the translation invariance,
  \begin{equation}
    \sum\limits_{k=1}^n \Proba(x \lra \partial \Lambda_k) \le 2 \sum\limits_{l=0}^{n-1} \theta_l.
    \label{eq:sum-x-lra-k}
  \end{equation}
  We deduce from \eqref{eq:sum-x-lra-k} and \eqref{eq:yel-ersatz} that
  \begin{equation}
    \sum\limits_{k=1}^n \delta_{Y_{e, l}}(T_k) \le 4 \sum\limits_{l=0}^{n-1} \theta_l
    \label{eq:yel}
  \end{equation}
  and from \eqref{eq:sum-x-lra-k} and \eqref{eq:zxl-ersatz} that
  \begin{equation}
    \sum\limits_{k=1}^n \delta_{Z_{x, l}}(T_k) \le 4d \sum\limits_{l=0}^{n-1} \theta_l
    \label{eq:zel}
  \end{equation}
  Now, applying Theorem~\ref{thm:OSSS} on all $T_k$ and summing yields
  \begin{equation}
    n \Var(\ind_{A_n}) \le \sum\limits_{e, l} \Cov(Y_{e, l}, \ind_{A_n}) \sum\limits_{k=1}^n \delta_{Y_{e, l}}(T_k) + \sum\limits_{x, l} \Cov(Z_{x, l}, \ind_{A_n}) \sum\limits_{k=1}^n \delta_{Z_{x, l}}(T_k).
  \end{equation}
  Finally, plugging \eqref{eq:yel} and \eqref{eq:zel} gives the desired result.
\end{proof}

\subsection{Inequalities related to pivotal variables}

In this model, Theorem~\ref{thm:Russo} and the chain rule give
\begin{equation}
  \theta_n'(\beta) = \sum\limits_{l=1}^M p_l'(\beta) \sum\limits_{e \in E} \Proba(Y_{e, l} \pivotal A_n)
  \label{eq:Russo-fw}
\end{equation}
Moreover, for a base variable $X$, according to Lemma~\ref{lem:cov},
\begin{equation}
  \Cov(X, \ind_{A_n}) = \Var(X) \Proba(X \pivotal A_n) \le \frac 14 \Proba(X \pivotal A_n)
  \label{eq:pivot-cov}
\end{equation}
Therefore, in order to conclude, we need inequalities relating the probabilities that neighbouring variables are pivotal, not unlike the case of site and bond percolation.

\begin{lemma}
  Let $x \in \Z^d, 1 \le k \le N$ and $n \in \N^*$ as well as $\beta \in \R_+^*$. Let $l_0 \in \llbracket 1, M \rrbracket$ such that for all $s, t \in \llbracket 1, N \rrbracket$ we have $(\zeta(s, t) \ge 1) \wedge (s \ge k) \implies \zeta(s, t) \ge l_0$. Then
  \begin{equation}
    \Proba(Z_{x, k} \pivotal A_n) \le \frac {(2d-1)(1-p_{l_0})} {\Proba(Z_{x, k} = 1) (1 - h_M)^{2d}} \sum\limits_{y \in \nei(x)} \Proba(Y_{\{x, y\}, l_0} \pivotal A_n) .
  \end{equation}
  \label{lem:zxk-ineq}
\end{lemma}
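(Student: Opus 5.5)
We adapt the proof of the second inequality of Lemma~\ref{lem:ineq-pivot-site-bond}: when $Z_{x,k}$ is pivotal we build a configuration in which a single edge $e$ at $x$ carries the connection, and in that configuration the variable $Y_{e,l_0}$ is pivotal. The factor $2d-1$ will come from the choice of the second edge used by the path through $x$. The factor $(1-h_M)^{2d}$ will come from forcing the other edges at $x$ to be closed.

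\textbf{Step 1: what pivotality of $Z_{x,k}$ gives.} Fix $\omega$ with $Z_{x,k}$ pivotal for $A_n$. Since $A_n$ is increasing, setting $Z_{x,k}=1$ raises $\nu_x$, and only edges incident to $x$ can change state. So in $\omega^{Z_{x,k}}$ there is a self-avoiding open path from $0$ to $\partial\Lambda_n$. Every such path uses an edge $\{x,y\}$ that is open in $\omega^{Z_{x,k}}$ but not in $\omega_{Z_{x,k}}$.
\begin{itemize}
\item In particular $\nu_x\ge k$ in $\omega^{Z_{x,k}}$.
\item Since $\zeta(\nu_x,\nu_y)\ge1$ for that edge, the hypothesis on $l_0$ gives $\zeta(\nu_x,\nu_y)\ge l_0$.
\end{itemize}
Fix such a path. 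It passes through $x$ using an edge $e=\{x,y\}$ and, unless $x\in\{0\}\cup\partial\Lambda_n$, a second edge $f=\{x,z\}$.

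\textbf{Step 2: the modified configuration.} Close every other edge at $x$ by setting all $Y_{g,l}=0$ for $g\in\nei(x)\setminus\{e,f\}$. Then set the base variables of $e$ so that $e$ is open only through level $l_0$: $Y_{e,l_0}=1$ and $Y_{e,l}=0$ for $l\neq l_0$. In this configuration the modified path still exists, and every path from $0$ to $\partial\Lambda_n$ must use $e$. Now $Y_{e,l_0}$ is pivotal in the sense of $A_n$, because $e$ is open if and only if $Y_{e,l_0}=1$ given $\zeta\ge l_0$.

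Because $e$ is a bottleneck whose openness is decided by $Y_{e,l_0}$ alone, we do not need to keep $Y_{f,\cdot}$ at its original values. This lets us define an event $B_{e,f}$ that depends only on base variables other than $Z_{x,k}$ and those of edges at $x$, except that we may keep $f$ open. We then get
\[
\Proba(Z_{x,k}\pivotal A_n)\le \sum_{e\ne f}\Proba(B_{e,f}).
\]

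\textbf{Step 3: comparing probabilities.} By independence we want
\[
\Proba(B_{e,f})\,\Proba(Z_{x,k}=1)\,(1-h_M)^{2d}\,\frac{1}{1-p_{l_0}}\le \Proba(Y_{e,l_0}\pivotal A_n).
\]
The factors on the left are obtained as follows.
\begin{itemize}
\item $\Proba(Z_{x,k}=1)$: we require $Z_{x,k}=1$.
\item $(1-h_M)^{2d}$: the other edges at $x$ are closed, and each edge is closed with probability at least $1-h_M$ whatever the weights. The edges $e$ and $f$ are handled through conditioning.
\item $1-p_{l_0}$: pivotality of $Y_{e,l_0}$ is independent of $Y_{e,l_0}$ itself. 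We therefore require $Y_{e,l}=0$ for $l\ne l_0$, at cost $\prod_{l\ne l_0}(1-p_l)\ge(1-h_M)/(1-p_{l_0})$, since $\prod_l(1-p_l)=1-h_M$. This is exactly where the factor $1-p_{l_0}$ in the numerator appears.
\end{itemize}
Summing over the at most $2d-1$ choices of $f$ for each $e$ gives the stated bound. The boundary cases $x\in\{0\}\cup\partial\Lambda_n$ use a single edge and give a better constant, as in \eqref{eq:x-pivotal-border}.

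\textbf{Main obstacle.} The hardest part is the bookkeeping in Step 3. We must define $B_{e,f}$ so that it is measurable with respect to variables independent of those we force: $Z_{x,k}$, the $Y$'s at $x$, and possibly $Z_{x,j}$ for $j>k$, since $\nu_x$ must stay $\ge k$. At the same time we must keep $\zeta(\nu_x,\nu_y)\ge l_0$ and keep $f$ open, using the monotonicity of $\zeta$.

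My first attempt is to set $Z_{x,j}=1$ for all $j\le k$. This is harmless because $\nu_x$ only increases, and the conditioning is then on variables disjoint from the rest. The cost of $f$ being open is absorbed by keeping $f$'s variables at their values in $\omega$ rather than forcing them, so that only $2d-1$ edges are forced closed. This keeps the total exponent at $2d$ or below.
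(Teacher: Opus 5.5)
Your construction follows the paper's proof. You separate $x\in\{0\}\cup\partial\Lambda_n$ (one edge $e$ at $x$) from interior $x$ (edges $e,f$). You set all $Y$'s of the other edges at $x$ to $0$, switch off the levels $l\ne l_0$ of $e$ at cost $(1-h_M)/(1-p_{l_0})$, leave $f$ untouched, and sum over the $2d-1$ partners $f$ of each $e$.

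The gap is in the step you flag as the main obstacle: your proposed resolution does not give the stated inequality. Suppose you also force $Z_{x,j}=1$ for all $j\le k$. Then your event $B_{e,f}$ implies pivotality of $Y_{e,l_0}$ for $\omega$ only when $\omega$ agrees with the modified configuration, that is, after intersecting with $\{Z_{x,j}=1\ \forall j\le k\}$. The independence step then puts $\prod_{j\le k}\Proba(Z_{x,j}=1)=q_\nu(\llbracket k,N\rrbracket)$ in the denominator. The lemma has $\Proba(Z_{x,k}=1)=q_\nu(\llbracket k,N\rrbracket)/q_\nu(\llbracket k-1,N\rrbracket)$ there, so for $k\ge 2$ you prove a strictly weaker bound. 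This forcing is not harmless, and your Step~3 list of factors omits this cost without comment.

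The missing observation is that no extra forcing is needed. As in the paper, let $B_{y_1,y_2}$ be the event that $Y_{\{x,y_1\},l_0}$ is pivotal for $\omega^{(x,k)}_{J}$, where $J=\{(\{x,y\},l): y\in\nei(x)\setminus\{y_2\},\ (y\ne y_1)\vee(l\ne l_0)\}$. In the boundary case, define $B_y$ with the analogous set $I_y$.

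This event is a function of $\omega^{(x,k)}_J$, so it is automatically independent of $Z_{x,k}$ and of $(Y_{g,l})_{(g,l)\in J}$. On $\{Z_{x,k}=1\}\cap\{Y_{g,l}=0\ \forall (g,l)\in J\}$ one has $\omega=\omega^{(x,k)}_J$. This gives exactly the factor $\Proba(Z_{x,k}=1)(1-h_M)^{2d-1}/(1-p_{l_0})$.

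The condition $\nu_x\ge k$ is used only for the inclusion $\{Z_{x,k}\pivotal A_n\}\subset\bigcup_{y_1\ne y_2}B_{y_1,y_2}$. There it holds automatically, because $Z_{x,k}$ can only be pivotal if $Z_{x,j}=1$ for all $j<k$.

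Two smaller corrections:
\begin{enumerate}
\item Your Step~2 claim that $Y_{f,\cdot}$ need not be kept at its original values is wrong. If $f$ may close, the path through $x$ breaks. Your final paragraph has this right.
\item The boundary case is not uniformly better. It forces $2d-1$ whole edges closed, plus the other levels of $e$. It is exactly where the exponent $2d$ in $(1-h_M)^{2d}$ comes from; the interior case only costs $(1-h_M)^{2d-1}$.
\end{enumerate}
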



\begin{remark}
  $l_0$ can be one if simplicity and generality are required (which is the case of our main theorem). However, with more knowledge on $\zeta$, the choice of $l_0$ provides some flexibility to achieve better bounds (see Remark~\ref{rem:better-bounds}). For example, one can choose $1 \le l_0 \le 1 \vee \zeta(k, 0)$ when it is relevant.
\end{remark}

\begin{remark}
  Unlike in Lemma~\ref{lem:ineq-pivot-site-bond}, we only have one inequality. However, we can prove an inequality in the other direction with similar methods under the additional assumption that $s \wedge t = 0 \implies \zeta(s, t) = 0$. Indeed, in that case, for an edge $e = \{x, y\}$ with $Y_{e, l} = 1$ pivotal, we can control the state of $e$ using the variable $Z_{x, 1}$ or $Z_{y, 1}$.
\end{remark}

\begin{proof}
  We observe that if $\omega \in \Omega$ makes $Z_{x, k}$ pivotal then the value of $\nu_x$ is different in $\omega_{(x, k)}$ and in $\omega^{(x, k)}$. Therefore, $\nu_x(\omega^{(x, k)}) \ge k$. Hence, for $y \in \nei(x)$, if $\zeta(\nu_x(\omega^{(x, k)}), \nu_y(\omega^{(x, k)})) \ge 1$ then $\zeta(\nu_x(\omega^{(x, k)}), \nu_y(\omega^{(x, k)})) \ge l_0$.\bigskip

  \emph{First,} we assume that $x \in \{0\} \cup \partial \Lambda_n$. Let $\omega \in \Omega$ where $Z_{x, k}$ is pivotal. The edges which are open in $\omega^{(x, k)}$ but not in $\omega_{(x, k)}$ have all $x$ for an endpoint, and at least one of them connects $x$ to a cluster intersecting $\{0\}$ (if $x \ne 0$) or $\partial \Lambda_n$ (if $x=0$). Let $e = \{x, y\}$ be such an edge.
  For $$\omega^{(x, k)}_{\{(\{x, z\}, l) : z \in \nei(x) \backslash \{y\}, l \in \llbracket 1, M \rrbracket\}}$$ we have that $e$ is open and pivotal. Indeed, we set $Z_{x, k}$ to $1$ so $e$ is open, and we closed all edges $\{x, z\}$ with $z \in \nei(x) \backslash \{y\}$. Hence, the aforementioned path from $0$ to $\partial \Lambda_n$ is unaffected. However, any path from $0$ to $\partial \Lambda_n$ has to have $x$ as an endpoint and therefore goes through $e$. Note that $e$ is open, so $\zeta(\nu_x, \nu_y) \ge 1$. Thus $\zeta(\nu_x, \nu_y) \ge l_0$.
\smallskip

  We set $I_y = \{(\{x, z\}, l) : z \in \nei(x), l \in \llbracket 1, M \rrbracket, (z \ne y) \vee (l \ne l_0)\}$. For $\omega^{(x, k)}_{I_y}$ (that is, we also change the values of $Y_{e, l}$ to $0$ whenever $l \ne l_0$), the edge $e$ might be closed but $Y_{e, l_0}$ is pivotal (remember that $\zeta(\nu_x, \nu_y) \ge l_0$). We can quantify these facts as follows. Let $y \in \nei(x)$ and $e = \{x, y\}$. We define $B_y$ the event that $Y_{e, l_0}$ is pivotal for $\omega^{(x, k)}_{I_y}$. It must be noted that $B_y, Z_{x, k}, (Y_{f, l})_{(f, l) \in I_y}$ are independent. Therefore,
  \begin{equation}
  \begin{aligned}
    \Proba(B_y) \Proba(Z_{x, k} = 1) & \, \frac {((1-p_{l_0}) \ldots (1-p_M))^{2d}} {1-p_{l_0}}
    \\ = & \Proba(B_y \cap (Z_{x, k} = 1, \forall (f, l) \in I_y, Y_{f, l} = 0)) \le \Proba(Y_{e, l_0} \pivotal A_n)
    \end{aligned}
  \end{equation}
  The previous statements amount to saying that
  \begin{equation}
    (Z_{x, k} \pivotal A_n) \subset \bigcup\limits_{y \in \nei(x)} B_y.
  \end{equation}
  Therefore,
  \begin{align}
    \Proba(Z_{x, k} \pivotal A_n) & \le \sum\limits_{y \in \nei(x)} \Proba(B_y) \\
                                  & \le \frac {1 - p_{l_0}} {\Proba(Z_{x, k} = 1) ((1-p_1) \ldots (1-p_M))^{2d}} \sum\limits_{y \in \nei(x)} \Proba(Y_{\{x, y\}, l_0} \pivotal A_n).
    \label{eq:zxk-pivot-boundary}
  \end{align}
\smallskip

  \emph{Second,} we proceed to the case $x \in \Lambda_{n-1} \backslash \{0\}$. Let $\omega \in \Omega$ where $Z_{x, k}$ is pivotal. Then $x$ is pivotal. In $\omega^{(x, k)}$ there are paths from $0$ to $\partial \Lambda_n$ and they all go through $x$ (instead of it being an endpoint) so there are two open edges $\{x, y_1\} \ne \{x, y_2\}$ such that there is a path from $0$ to $\partial \Lambda_n$ going through $y_1, x, y_2$.
For $$\omega^{(x, k)}_{\{(\{x, y\}, l) : y \in \nei(x) \backslash \{y_1, y_2\}, l \in \llbracket 1, M \rrbracket\}}$$ the edges $\{x, y_1\}$ and $\{x, y_2\}$ are open and pivotal. Indeed, the aforementioned path from $0 \lra y_1 \lra x \lra y_2 \lra \partial \Lambda_n$ is unaffected, and we closed all edges $\{x, y\}$ with $y \in \nei(x) \backslash \{y_1, y_2\}$ so any path from $0$ to $\partial \Lambda_n$ must go through $x$, therefore through $y_1$ and $y_2$. Note that $\{x, y_1\}$ is open so $\zeta(\nu_x, \nu_{y_1}) \ge 1$. It implies that $\zeta(\nu_x, \nu_{y_1}) \ge l_0$.\smallskip

  We set \smash{$J_{y_1, y_2} = \{(\{x, y\}, l) : y \in \nei(x) \backslash \{y_2\}, l \in \llbracket 1, M \rrbracket, (y \ne y_1) \vee (l \ne l_0)\}$}. For $\omega^{(x, k)}_{J_{y_1, y_2}}$ the variable $Y_{\{x, y_1\}, l}$ is pivotal. Indeed, $\{x, y_2\}$ is still open, $\{x, y_1\}$ pivotal, and the state of $\{x, y_1\}$ is controlled by $Y_{\{x, y_1\}, l_0}$ (recall that $\zeta(\nu_x, \nu_{y_1}) \ge l_0$).
  We denote by $B_{y_1, y_2}$ the event that $Y_{\{x, y_1\}, l_0}$ is pivotal for \smash{$\omega^{(x, k)}_{J_{_1, y_2}}$}. Since $B_{y_1, y_2}$ is independent of $Z_{x, k}$ and $Y_{e, l}$ for $(e, l) \in J_{y_1, y_2}$, we have
  \begin{align}
    \Proba(Z_{x, k} = 1) \frac {((1-p_1) \ldots (1-p_M))^{2d-1}} {1-p_{l_0}} \Proba(B_{y_1, y_2}) & = \Proba(B_{y_1, y_2}, Z_{x, k} = 1, \forall (e, l) \in J_{y_1, y_2}, Y_{e, l} = 0) \\
                                                                                             &\le \Proba(Y_{\{x, y_1\}, l_0} \pivotal A_n).
  \end{align}
  The discussion above implies that
  \begin{align}
    \Proba(Z_{x, k} \pivotal A_n) & \le \sum\limits_{y_1, y_2 \in \nei(x), y_1 \ne y_2} \Proba(B_{y_1, y_2}) \\
                                  & \le \frac {(2d-1)(1-p_{l_0})} {\Proba(Z_{x, k} = 1) ((1-p_1) \ldots (1-p_M))^{2d-1}} \sum\limits_{y \in \nei(x)} \Proba(Y_{\{x, y\}, l_0} \pivotal A_n).
    \label{eq:zxk-pivot-interior}
  \end{align}
  We now combine \eqref{eq:zxk-pivot-interior} and \eqref{eq:zxk-pivot-boundary} and notice that $(1-p_1) \ldots (1-p_M) = 1 - h_M$ to obtain the desired result.
\end{proof}

\subsection{Conclusion}

With these results in hand, we can confidently unwind the proof as for bond/site percolation.

\begin{theorem}
  We assume that there exists $\beta_c \in \R_+^*$ such that $ \theta(\beta) = 0$ for all $\beta \in ]0, \beta_c[$ and $ \theta(\beta) > 0$ for all $\beta \in ]\beta_c, +\infty[$. Then,
  \begin{itemize}
    \item for all $\beta \in ]0, \beta_c[$ there exists $c_\beta > 0$ such that for any $n$ large enough, $\theta_n(\beta) \le \exp(- c_\beta n)$;
    \item there exists $c, \delta > 0$ such that $ \theta(\beta) \ge c (\beta - \beta_c)$ for all $\beta \in [\beta_c, \beta_c + \delta[$.
  \end{itemize}
\end{theorem}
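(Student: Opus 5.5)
The plan is to verify the differential inequality \eqref{eq:3.1} for $f_n = \frac1c\theta_n$ on a compact interval $[\beta_c-\delta,\beta_c+\delta]$, and then invoke Lemma~\ref{lem:3.1}. I will mimic the proof of Theorem~\ref{thm:curve}, with the parameter $\beta$ playing the role of $t$.

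\emph{Step 1: lower bound on the derivative.} Start from Russo's formula \eqref{eq:Russo-fw}, $\theta_n' = \sum_l p_l' \sum_e \Proba(Y_{e,l}\pivotal A_n)$. Since each $p_l$ is $C^1$ with $p_l'>0$ (Remark~\ref{rem:Bernoulli-compat}), on the compact interval $I=[\beta_c-\delta,\beta_c+\delta]\subset\R_+^*$ we have $\min_l \inf_I p_l' =: a>0$. Hence
\[
\theta_n'(\beta)\ge a\sum_{e}\sum_{l=1}^M \Proba(Y_{e,l}\pivotal A_n).
\]

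\emph{Step 2: absorb the vertex terms.} Apply Lemma~\ref{lem:zxk-ineq} with $l_0=1$. The hypothesis is trivial for $l_0=1$, since $\zeta\ge1$ implies $\zeta\ge1$. This gives, for every $x$ and $k$,
\[
\Proba(Z_{x,k}\pivotal A_n)\le C_1\sum_{y\in\nei(x)}\Proba(Y_{\{x,y\},1}\pivotal A_n),
\qquad
C_1=\sup_{\beta\in I}\frac{(2d-1)}{\min_k \Proba(Z_{x,k}=1)\,(1-h_M(\beta))^{2d}}.
\]
The constant $C_1$ is finite. Indeed, $\Proba(Z_{x,k}=1)=q_\nu(\llbracket k,N\rrbracket)/q_\nu(\llbracket k-1,N\rrbracket)\ge q_\nu(N)>0$, and $h_M<1$ is continuous, so it is bounded away from $1$ on $I$. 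Summing over $x$ and $k$, each edge is counted twice (once per endpoint) and $N$ times in $k$. Therefore
\[
\sum_x\sum_k\Proba(Z_{x,k}\pivotal A_n)\le 2NC_1\sum_e \Proba(Y_{e,1}\pivotal A_n).
\]
Next use \eqref{eq:pivot-cov}, $\Cov\le\frac14\Proba(\cdot\pivotal A_n)$, inside Lemma~\ref{lem:4.6-fw}:
\[
\frac{n}{\sum_{k<n}\theta_k}\theta_n(1-\theta_n)\le \sum_{e,l}\Proba(Y_{e,l}\pivotal A_n)+d\sum_{x,k}\Proba(Z_{x,k}\pivotal A_n)\le (1+2dNC_1)\sum_{e,l}\Proba(Y_{e,l}\pivotal A_n).
\]
Combining this with Step 1 yields
\[
\theta_n'\ge \frac{a}{1+2dNC_1}\,\frac{n}{\sum_{k<n}\theta_k}\,\theta_n(1-\theta_n).
\]

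\emph{Step 3: bound $1-\theta_n$ and conclude.} By monotonicity, $1-\theta_n(\beta)\ge 1-\theta_1(\beta_c+\delta)$. This is positive because $0$ can be isolated: close all $2d$ adjacent edges, an event of probability at least $(1-h_M)^{2d}>0$. So on $I$ we get $\theta_n'\ge c\,\frac{n}{\sum_{k<n}\theta_k}\theta_n$, and Lemma~\ref{lem:3.1} applies to $(\frac1c\theta_n)$ restricted to $I$. It produces a threshold $\beta_1\in I$ with exponential decay below and linear growth above. Because $\theta=0$ on $]0,\beta_c[$ and $\theta>0$ on $]\beta_c,\infty[$, we must have $\beta_1=\beta_c$. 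Exponential decay then extends to all $\beta<\beta_c$ by monotonicity of $\theta_n$ in $\beta$, as at the end of Theorem~\ref{thm:curve}.

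\emph{Main obstacle.} Lemma~\ref{lem:3.1} is stated on an interval $[0,\beta_0]$, whereas here $\beta>0$ and the constants degenerate as $\beta\to0$, since $p_l'$ could vanish in the limit. I would handle this by applying the lemma on a shifted interval $[\beta_c-\delta,\beta_c+\delta]$, after translating the variable. There, $a$, $C_1$ and the $1-\theta_1$ bound are all uniform. The only delicate point is checking that the translated conclusion still pins the threshold at $\beta_c$, which follows from the dichotomy assumed in the statement.
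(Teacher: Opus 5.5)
Your proposal is correct and follows essentially the same route as the paper. The chain is Russo's formula, then Lemma~\ref{lem:zxk-ineq} with $l_0=1$ to absorb the vertex-pivotal terms into the edge-pivotal ones, then \eqref{eq:pivot-cov} with Lemma~\ref{lem:4.6-fw}, and finally Lemma~\ref{lem:3.1} on a segment with $\beta_c$ in its interior. Where the paper writes $\sum Y$ as a convex combination, you bound $\sum Y + d\sum Z \le (1+2dNC_1)\sum Y$; this is the same computation.

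Your write-up is slightly more careful than the paper's in three places:
\begin{itemize}
\item you make the constants uniform on the compact interval;
\item you justify $1-\theta_1(\beta_c+\delta)>0$;
\item you explicitly identify $\beta_1=\beta_c$.
\end{itemize}
You also correctly invoke Lemma~\ref{lem:3.1}, which is what the paper's final reference to Lemma~\ref{lem:3.2} is meant to be.
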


\begin{proof}
  Let $\beta_0 \in \R_+^*$. For $0 <\beta \le \beta_0$, we obtain:
  First, using \eqref{eq:Russo-fw},
  \begin{equation}
    \theta_n'(\beta) \ge \left( \min\limits_{1 \le l \le M} p_l'(\beta) \right) \sum\limits_{e \in E} \sum\limits_{l=1}^M \Proba(Y_{e, l} \pivotal A_n).
  \end{equation}
  Second, using Lemma~\ref{lem:zxk-ineq} with $l_0 = 1$ and denoting $u = \frac {(2d-1)(1-p_1)} {(1-h_M)^{2d} \min\limits_{1 \le k \le N} \Proba(Z_{0, k} = 1)}$,
  \begin{multline}
    \theta_n'(\beta) \ge \left( \min\limits_{1 \le l \le M} p_l'(\beta) \right) \Biggl( \frac 1 {1 + 2d N u(\beta)} \sum\limits_{e \in E} \sum\limits_{l=1}^M \Proba(Y_{e, l} \pivotal A_n) \\
    + \frac {2d N u(\beta)} {1 + 2d N u(\beta)} \frac 1 {2N u(\beta)} \sum\limits_{x \in \Z^d} \sum\limits_{k=1}^N \Proba(Z_{x, k} \pivotal A_n) \Biggr).
  \end{multline}
  Hence, because of \eqref{eq:pivot-cov},
  \begin{equation}
    \theta_n'(\beta) \ge \left( \min\limits_{1 \le l \le M} p_l'(\beta) \right) \frac 1 {1 + 2d N u(\beta)} \left( 4 \sum\limits_{e \in E} \sum\limits_{l=1}^M \Cov(Y_{e, l}, \ind_{A_n}) + 4d \sum\limits_{x \in \Z^d} \sum\limits_{k=1}^N \Cov(Z_{x, k}, \ind_{A_n}) \right).
  \end{equation}
  We can now use Lemma~\ref{lem:4.6-fw}:
  \begin{align}
    \theta_n'(\beta) & \ge \left( \min\limits_{1 \le l \le M} p_l'(\beta) \right) \frac 1 {1 + 2d N u(\beta)} \cdot \frac n {\sum\limits_{k=0}^{n-1} \theta_k} \theta_n (1 - \theta_n) \\
                     & \ge \left( \min\limits_{1 \le l \le M} p_l'(\beta) \right) \frac {1 - \theta_1(\beta_0)} {1 + 2d N u(\beta)} \cdot \frac n {\sum\limits_{k=0}^{n-1} \theta_k} \theta_n. \label{eq:final-ineq}
  \end{align}
  It allows us to apply Lemma~\ref{lem:3.2} on all segments of $\R_+^*$. Since there is a phase transition at $\beta_c$, we apply Lemma~\ref{lem:3.2} on a segment whose interior contains $\beta_c$.
\end{proof}

\begin{remark}
  There is a factor $N$ in the constant of \eqref{eq:final-ineq}. We may remove it if there is a constant $c > 0$ such that $\forall s, t \in \llbracket 1, N \rrbracket, \zeta(s, t) \ge 1 \implies \zeta(s, t) \ge cs$ (this is the case with the sum, the maximum and the product for example). Fix $x \in \Z^d$. For $k \in \llbracket 1, N \rrbracket$, we choose $l_k = \lceil ck \rceil$. It takes at most $\lceil c^{-1} \rceil$ times the same value. Therefore, when we apply Lemma~\ref{lem:zxk-ineq} and sum over $k$,
  \begin{align}
    \sum\limits_{k=1}^N \Proba(Z_{x, k} \pivotal A_n) & \le \sum\limits_{k=1}^N \frac {(2d-1)(1-p_{l_k})} {(1 - h_M)^{2d} \Proba(Z_{x, k} = 1)} \sum\limits_{y \in \nei(x)} \Proba(Y_{\{x, y\}, l_k} \pivotal A_n) \\
                                                      & \le \frac {(2d-1)} {(1 - h_M)^{2d} \min\limits_{k \in \llbracket 1, N \rrbracket} \Proba(Z_{x, k} = 1)} \lceil c^{-1} \rceil \sum\limits_{y \in \nei(x)} \sum\limits_{l=1}^M \Proba(Y_{\{x, y\}, l} \pivotal A_n).
  \end{align}
  If $c \ge 1$, we can do even better. Indeed, according to Lemma~\ref{lem:zxk-ineq},
  \begin{equation}
    (\lfloor c(k+1) \rfloor - \lfloor ck \rfloor)\Proba(Z_{x, k} \pivotal A_n) \le \frac {(2d-1)} {(1 - h_M)^{2d} \Proba(Z_{x, k} = 1)} \sum\limits_{y \in \nei(x)} \sum\limits_{l=\lfloor ck \rfloor}^{\lfloor c(k+1) \rfloor - 1} \Proba(Y_{\{x, y\}, l} \pivotal A_n).
  \end{equation}
  Note that $\lfloor c(k+1) \rfloor - \lfloor ck \rfloor \ge \lfloor c \rfloor$. Therefore, when summing over $k$,
  \begin{equation}
      \sum\limits_{k=1}^N \Proba(Z_{x, k} \pivotal A_n) \le \frac {(2d-1)} {(1 - h_M)^{2d} \min\limits_{k \in \llbracket 1, N \rrbracket} \Proba(Z_{x, k} = 1)} \frac 1 {\lfloor c \rfloor} \sum\limits_{y \in \nei(x)} \sum\limits_{l=1}^M \Proba(Y_{\{x, y\}, l} \pivotal A_n).
  \end{equation}
  We traded the factor $N$ for $\lceil c^{-1} \rceil$ or $\frac 1 {\lfloor c \rfloor}$ depending on the case. However, the constant still depends heavily on $N$, making it difficult to extend \eqref{eq:final-ineq} (and therefore sharp phase transition) to other distributions $q_\nu$ by taking a limit as $N$ goes to $+\infty$.
  \label{rem:better-bounds}
\end{remark}

{\bf Acknowledgment:} This note was written while the first author was on a research internship at the University of Cologne. The authors would like to thank Alexander Drewitz for useful discussions.